\def\BState{\State\hskip-\ALG@thistlm}
\def\downbar#1{
\setbox10=\hbox{$#1$}
            \dimen10=\ht10 \advance\dimen10 by 2.5pt
            \ifdim \dimen10<15pt %equals approximately 0.5cm
               \advance\dimen10 by -0.5pt
               \dimen11=\dimen10
               \advance\dimen10 by 2.5pt
               \lower \dimen11
            \else \lower \ht10 \fi
            \hbox {\hskip 1.5pt \vrule height \dimen10 depth \dp10}}
\def\upbar#1{
\setbox10=\hbox{$#1$}
            \dimen10=\ht10 \advance\dimen10 by \dp10 \advance\dimen10 by 2.5pt
            \ifdim \dimen10<15pt %equals approximately 0.5cm
                \advance\dimen10 by 2pt \fi
            \raise 2.5pt \hbox {\hskip -1.5pt \vrule height \dimen10}}
\newtheorem{theorem}{\bf Theorem}[section]
\newtheorem{lemma}{\bf Lemma}[section]
\newtheorem{corollary}{\bf Corollary}[section]
\newtheorem{remark}{\bf Remark}[section]
\numberwithin{equation}{section}
\begin{document}
\title[Orthogonal polynomials]{Characterization of Orthogonal Polynomials on lattices}

%%%%%%%%%%%%%%%%%%%%%%%%%%%%%%%%%%%%%%%%%%%%%%%%%%%%%%%%%%%%%%%%%%%%%%%%%%%%%
\author{D. Mbouna}
\address{University of Almer\'ia, Department of Mathematics, Almer\'ia, Spain}
\email{mbouna@ual.es}
\author{Juan F. Ma\~nas-Ma\~nas}
\address{University of Almer\'ia, Department of Mathematics, Almer\'ia, Spain}
\email{jmm939@ual.es}
\author{Juan J. Moreno-Balc\'azar}
\address{University of Almer\'ia, Department of Mathematics, Almer\'ia, Spain}
\email{balcazar@ual.es}

%%%%%%%%%%%%%%%%%%%%%%%%%%%%%%%%%%%%%%%%%%%%%%%%%%%%%%%%%%%%%%%%%%%%%%%%%%%%%

\subjclass[2010]{42C05, 33C45}
\date{\today}
\keywords{semiclassical functional, Wilson polynomials, continuous dual Hahn polynomials, lattices}

\begin{abstract}
We consider two sequences of orthogonal polynomials $(P_n)_{n\geq 0}$ and $(Q_n)_{n\geq 0}$ such that 
\begin{align*}
\sum_{j=1} ^{M} a_{j,n}\mathrm{S}_x\mathrm{D}_x ^k P_{k+n-j} (z)=\sum_{j=1} ^{N} b_{j,n}\mathrm{D}_x ^{m} Q_{m+n-j} (z)\;,
\end{align*}
with $k,m,M,N \in \mathbb{N}$, $a_{j,n}$ and $b_{j,n}$ are sequences of complex numbers, $$2\mathrm{S}_xf(x(s))=(\triangle +2\,\mathrm{I})f(z),~~ \mathrm{D}_xf(x(s))=\frac{\triangle}{\triangle x(s-1/2)}f(z),$$ $z=x(s-1/2)$, $\mathrm{I}$ is the identity operator, $x$ defines a lattice, and $\triangle f(s)=f(s+1)-f(s)$. We show that under some natural conditions, both involved orthogonal polynomials sequences $(P_n)_{n\geq 0}$ and $(Q_n)_{n\geq 0}$ are semiclassical whenever $k=m$. Some particular cases are studied closely where we characterize the continuous dual Hahn and Wilson polynomials for quadratic lattices.
\end{abstract}
\maketitle

\section{Introduction}\label{introduction}
One of motivations of this work is to obtain a Al-Salam and Chihara type (see \cite{Al-Salam-1972}) characterization of classical orthogonal polynomials on lattices. That is to characterize all orthogonal polynomials sequences (OPS), $(P_n)_{n\geq 0}$, solutions of the following equation
\begin{align}\label{problem}
&(az^2+bz+c)\frac{\triangle}{\triangle x(s-1/2)} P_n(x(s-1/2))\\
\nonumber&\qquad=(\triangle +2\,\mathrm{I})(a_nP_{n+1}+b_nP_n+c_nP_{n-1})(x(s-1/2)),
\end{align}
where $\,\mathrm{I}$ is the identity operator, $a$, $b$ and $c$ are some well chosen complex numbers, $x$ defines a class of lattices (or grids) with, generally, nonuniform step-size, $\triangle f(s)=f(s+1)-f(s)$, and $\nabla f(s)=\triangle f(s-1)$. This problem finds his origin in \cite{KCDMJP2021-a}. The case where the lattice $x$ is $q$-quadratic and given by $x(s)=(q^{-s}+q^s)/2$ was solved recently in \cite{DMAZ2022}, where under some conditions imposed in $a$, $b$ and $c$,  the only solutions are the Askey-Wilson polynomials including special or limiting cases of them. But as noticed in \cite{KJ2018}, when we consider a quadratic lattice for \eqref{problem}, solutions ``can not easily be deduced from those of Askey-Wilson polynomials". Our objective is to present an analogue of this problem for quadratic lattices $x(s)=\mathfrak{c}_4s^2+\mathfrak{c}_5s$ and therefore to provide another characterization of such OPS on lattices. For instance, it is well known that classical OPS on lattices are characterized (see \cite[Theorem 4.3]{CSFM2009}) by the following equation
$$(\triangle +2\,\mathrm{I}) P_n(x(s-1/2))=\frac{\triangle}{\triangle x(s-1/2)}(a_nP_{n+1}+b_n\mathrm{D}_xP_n+c_n\mathrm{D}_xP_{n-1})(x(s-1/2))\;.$$
In addition it is proved in \cite{DMAZ2022} that, for the $q$-quadratic lattice $x(s)=(q^{-s}+q^s)/2$, the following equation has classical OPS as solutions
\begin{align*}
\sum_{j=n-1} ^{n+1} a_{n,j}(\triangle +2\,\mathrm{I})P_j(x(s-1/2))=\sum_{j=n-4} ^{n+2} b_{n,j}\frac{\triangle}{\triangle x(s-1/2)} P_{j} (x(s-1/2)) \;.
\end{align*} 
Therefore a second motivation of this work is to study such structure relations in a more general form. This is why we consider two (monic) OPS $(P_n)_{n\geq 0}$ and $(Q_n)_{n\geq 0}$, and $k,m,M,N \in \mathbb{N}$ such that 
\begin{align*}
\sum_{j=1} ^{M} a_{j,n}\mathrm{S}_x\mathrm{D}_x ^k P_{k+n-j} (z)=\sum_{j=1} ^{N} b_{j,n}\mathrm{D}_x ^{m} Q_{m+n-j} (z), \quad z=x(s)\;,
\end{align*}
where $2\mathrm{S}_xf(x(s))=(\triangle +2\,\mathrm{I})f(x(s-1/2))$, $\mathrm{D}_xf(x(s))=\frac{\triangle}{\triangle x(s-1/2)}f(x(s-1/2))$, $a_{j,n}$ and $b_{j,n}$ are sequences of complex numbers.  Our aim is to study the semiclassical character of the OPS involved in the above equation.

The structure of this note is as follows. Section 2 presents some basic facts of the algebraic theory of OPS on lattices and some preliminary results. In Section \ref{main-results} our main results are stated and proved. In Section \ref{Special_case} we present a finer result for a special case.

\section{Background and Preliminary}
Let $\mathcal{P}$ be the vector space of all polynomials with complex coefficients
and let $\mathcal{P}^*$ be its algebraic dual. A simple set in $\mathcal{P}$ is a sequence $(Q_n)_{n\geq0}$ such that $\mathrm{deg}(Q_n)=n$ for each $n$. A simple set $(Q_n)_{n\geq0}$ is called an OPS with respect to ${\bf w}\in\mathcal{P}^*$ if 
$$
\langle{\bf w},Q_nQ_m\rangle=h_n\delta_{n,m}\;,\quad m=0,1,\ldots;\;h_n\in\mathbb{C}\setminus\{0\}\;.
$$
In this case, we say that ${\bf w}$ is  regular. The left multiplication of a functional ${\bf w}$ by a polynomial $\pi$ is defined by
$$
\left\langle \pi {\bf w}, p  \right\rangle =\left\langle {\bf w},\pi p  \right\rangle\;, \quad p\in \mathcal{P}\;.
$$
A dual basis $({\bf r}_n)_{n\geq 0}$ of a simple set polynomial sequence $(Q_n)_{n\geq 0}$ is a sequence in $\mathcal{P}^*$ such that $\left\langle {\bf r}_n, Q_m   \right\rangle =\delta_{n,m}$, for all $n,m$. Consequently, if $(Q_n)_{n\geq0}$ is a (monic) OPS with respect to ${\bf w}\in\mathcal{P}^*$, then the corresponding dual basis is explicitly given by 
\begin{align}\label{expression-an}
{\bf r}_n =\left\langle {\bf w} , Q_n ^2 \right\rangle ^{-1} Q_n{\bf w}.
\end{align}
In addition any functional ${\bf v} \in \mathcal{P}^*$ (when $\mathcal{P}$ is endowed with an appropriate strict inductive limit topology, see \cite{M1991}) can be written in the sense of the weak topology in $\mathcal{P}^*$ as 
\begin{align*}
{\bf v} = \sum_{n=0} ^{\infty} \left\langle {\bf v}, Q_n \right\rangle {\bf r}_n.
\end{align*}
It is known (see \cite{C1978}) that a monic OPS, $(Q_n)_{n\geq 0}$, is characterized by the following three-term recurrence relation (TTRR):
\begin{align}\label{TTRR_relation}
Q_{-1} (z)=0, \quad Q_{n+1} (z) =(z-B_n)Q_n (z)-C_n Q_{n-1} (z)\;, \quad C_n \neq 0\;,
\end{align}
and, therefore,
\begin{align}\label{TTRR_coefficients}
B_n = \frac{\left\langle {\bf w} , zQ_n ^2 \right\rangle}{\left\langle {\bf w} , Q_n ^2 \right\rangle},\quad C_{n+1}  = \frac{\left\langle {\bf w} , Q_{n+1} ^2 \right\rangle}{\left\langle {\bf w} , Q_n ^2 \right\rangle}.
\end{align}

In our framework, a lattice $x$ is a mapping given by (see \cite{ARS1995})
\begin{align*}
x(s):=\left\{
\begin{array}{lcl}
\mathfrak{c}_1 q^{-s} +\mathfrak{c}_2 q^s +\mathfrak{c}_3,&  q\neq1\\ [7pt]
\mathfrak{c}_4 s^2 + \mathfrak{c}_5 s +\mathfrak{c}_6, &  q =1,
\end{array}
\right.
\end{align*}
where $q>0$ and $\mathfrak{c}_j$ ($1\leq j\leq6$) are complex numbers such that $(\mathfrak{c}_1,\mathfrak{c}_2)\neq(0,0)$ if $q\neq1$.
Note that
$x\big(s+\frac12\big)+x\big(s-\frac12\big)=2\alpha x(s)+2\beta,$
where
\begin{align*}
\alpha=\frac{q^{1/2}+q^{-1/2}}{2},\quad
\beta=\left\{
\begin{array}{lcl}
(1-\alpha)\mathfrak{c}_3, &  q\neq1,\\ [7pt]
\mathfrak{c}_4/4, &  q =1.
\end{array}
\right.
\end{align*}
We define $\alpha_n:=(q^{n/2} +q^{-n/2})/2$ and
\begin{align*}
\gamma_n := \left\{
\begin{array}{lcl}
\displaystyle\frac{q^{n/2}-q^{-n/2}}{q^{1/2}-q^{-1/2}}, & q\neq1 \\ [7pt]
n, & q=1.
\end{array}
\right. 
\end{align*}
We set $\gamma_{-1}:=-1$ and $\alpha_{-1}:=\alpha$. We also define two operators $\mathrm{D}_x$ and $\mathrm{S}_x$ on $\mathcal{P}$ by 
\begin{align*}
\mathrm{D}_x f(x(s))=\frac{\triangle}{{ \triangle} x(s-1/2)}f(x(s-1/2)),\quad \mathrm{S}_x f(x(s))= \frac{1}{2}(\triangle+2\,\mathrm{I})f(x(s-1/2)),
\end{align*}
These operators  induce two elements on $\mathcal{P}^*$, say $\mathbf{D}_x$ and $\mathbf{S}_x$, via the following definition (see \cite{FK-NM2011}): 
\begin{align*}
\langle \mathbf{D}_x{\bf u},f\rangle=-\langle {\bf u},\mathrm{D}_x f\rangle,\quad \langle\mathbf{S}_x{\bf u},f\rangle=\langle {\bf u},\mathrm{S}_x f\rangle.
\end{align*}

 Let $f,g\in\mathcal{P}$ and ${\bf u}\in\mathcal{P}^*$. Then the following properties hold (see e.g. \cite{KCDMJP2021-a, FK-NM2011, SMMFPN2017}):
\begin{align}
\mathrm{D}_x \big(fg\big)&= \big(\mathrm{D}_x f\big)\big(\mathrm{S}_x g\big)+\big(\mathrm{S}_x f\big)\big(\mathrm{D}_x g\big), \label{def-Dx-fg} \\[7pt]
\mathrm{S}_x \big( fg\big)&=\big(\mathrm{D}_x f\big) \big(\mathrm{D}_x g\big)\texttt{U}_2  +\big(\mathrm{S}_x f\big) \big(\mathrm{S}_x g\big), \label{def-Sx-fg} \\[7pt]
f\mathrm{D}_xg&=\mathrm{D}_x\left[ \Big(\mathrm{S}_xf-\frac{\texttt{U}_1}{\alpha}\mathrm{D}_xf \Big)g\right]-\alpha ^{-1}\mathrm{S}_x \Big(g\mathrm{D}_x f\Big) , \label{def-fDxg} \\[7pt]
{\bf D}_x (f{\bf u})&=\Big(\mathrm{S}_xf-\alpha ^{-1} \texttt{U}_1\mathrm{D}_xf \Big){\bf D}_x {\bf u}+\alpha ^{-1}\mathrm{D}_xf~{\bf S}_x{\bf u},\label{def_D_xfu}\\[7pt]
{\bf S}_x (f{\bf u})&=\Big(\alpha \texttt{U}_2 -\alpha^{-1}\texttt{U}_1 ^2 \Big)\mathrm{D}_x f~{\bf D}_x{\bf u} +\Big(\mathrm{S}_xf+\alpha ^{-1} \texttt{U}_1\mathrm{D}_xf \Big){\bf S}_x{\bf u},\label{def_S_xfu}\\[7pt]
f{\bf D}_x {\bf u}&={\bf D}_x\left(S_xf~{\bf u}  \right)-{\bf S}_x\left(D_xf~{\bf u}  \right), \label{def-fD_x-u}\\[7pt]
\alpha \mathbf{D}_x ^n \mathbf{S}_x {\bf u}&= \alpha_{n+1} \mathbf{S}_x \mathbf{D}_x^n {\bf u}
+\gamma_n \texttt{U}_1\mathbf{D}_x^{n+1}{\bf u}, \label{DxnSx-u} 
\end{align}
where
\begin{align*}%\label{U1-U2-simples}
\texttt{U}_1 (z)&=\left\{
\begin{array}{lcl}
(\alpha^2-1)\big(z-\mathfrak{c}_3\big), & q\neq1\\[7pt]
2\beta, &q=1,
\end{array}
\right. \\[7pt]
\texttt{U}_2(z)&=\left\{
\begin{array}{lcl}
(\alpha^2-1)\big((z-\mathfrak{c}_3)^2-4\mathfrak{c}_1\mathfrak{c}_2\big), & q\neq1\\[7pt]
4\beta (z-\mathfrak{c}_6)+\mathfrak{c}_5^2/4, & q=1.
\end{array}
\right.
\end{align*}
It is known that (see \cite{KCDMJP2021-a}) if $x(s)=4\beta s^2 +\mathfrak{c}_5s$, then
\begin{align}\label{Dx-xnSx-xn-quadratic}
\mathrm{D}_x z^n =n z^{n-1}+v_nz^{n-2}+\cdots,\quad 
\mathrm{S}_x z^n =z^n+\widehat{v}_nz^{n-1}+\cdots\;,
\end{align}
with $v_n=\beta n(n-1)(2n-1)/3$ and $\widehat{v}_n=\beta n(2n-1)$. 

We denote by $Q_n ^{[k]}$, with $k=0,1,\ldots$, the monic polynomial of degree $n$ defined by
\begin{align*}
Q_n ^{[k]} (z)=\frac{\gamma_{n} !}{\gamma_{n+k} !} \mathrm{D}_x ^k Q_{n+k} (z)\;,
\end{align*}
with $\gamma_0 !=1$, $\gamma_{n+1}!=\gamma_1\cdots \gamma_n \gamma_{n+1}$. If $({\bf r}^{[k]} _n)_{n\geq 0}$ is the dual basis associated to the sequence $(Q_n ^{[k]})_{n\geq 0}$, it is known that (see \cite{KCDMJP2021-a})
\begin{align}
{\bf D}_x ^k {\bf r}^{[k]} _n=(-1)^k \frac{\gamma_{n+k}!}{\gamma_n ! }{\bf r}_{n+k}\;,\quad k=0,1,\ldots\;. \label{basis-Dx-derivatives}
\end{align}
\section{main results}\label{main-results}
We start with the following lemma using ideas developed in \cite{Petronilho2006} and \cite{CD20}.
\begin{lemma}\label{lemma1}
Let $({\bf u}, {\bf v})$ be a pair of regular functionals and $((P_n)_{n\geq 0}, (Q_n)_{n\geq 0})$ the corresponding pair of monic OPS. Assume that for some $k,m,M,N \in \mathbb{N}$, we have 
\begin{align}\label{general_problem}
\sum_{j=0} ^{M} a_{j,n}\mathrm{S}_x P_{n-j} ^{[k]} (z)=\sum_{j=0} ^{N} b_{j,n} Q_{n-j} ^{[m]} (z)\;, \quad \quad n=0,1,\ldots\;,
\end{align}
for some complex sequences $a_{i,n}$ and $b_{i,n}$, with $a_{0,n}=1=b_{0,n}$ and $a_{M,n}b_{N,n}\neq 0$ for all $n$. Let $\mathcal{A}_{M+N}=\Big[l_{i,j}\Big]_{i,j=0} ^{M+N-1}$ be the following matrix of order $M+N$,
\begin{align*}
l_{i,j} = \left\{
\begin{array}{lclcl}
\displaystyle a_{j-i,j}, ~ \textit{if}~~0\leq i\leq N-1~\textit{and}~ i\leq j\leq M+i\;,  \\ [7pt]
b_{j-i+N,j}, ~ \textit{if}~~N\leq i\leq M+N-1~\textit{and}~ i-N\leq j\leq i \;,\\
0,~ \textit{otherwise\;.}
\end{array}
\right. 
\end{align*}
Assume that $det(\mathcal{A}_{M+N})\neq 0$ and $k\geq m$. Then there exist three polynomials $\psi_{N+k+n}$, $\phi_{M+m+n+1}$ and $\rho_{M+m+n}$ of degrees $N+k+n$, $M+m+n+1$ and $M+m+n$, respectively, such that 
\begin{align}\label{relation_with_dual_basis}
\psi_{N+k+n}{\bf u}= {\bf D}_x ^{k-m}\Big(\phi_{M+m+n+1}{\bf D}_x{\bf v} +\rho_{M+m+n}{\bf S}_x{\bf v}  \Big)\;, ~\quad\quad~n=0,1,\ldots\;.
\end{align} 
\end{lemma}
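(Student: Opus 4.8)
The plan is to transport the polynomial identity \eqref{general_problem} to the dual side, use the hypothesis $\det(\mathcal{A}_{M+N})\neq0$ to extract from it a \emph{finite} relation among the functionals attached to $(P_n)_{n\ge0}$ and $(Q_n)_{n\ge0}$, and then convert that relation into \eqref{relation_with_dual_basis} by applying ${\bf S}_x$, ${\bf D}_x^{k}$ and the structural formulas \eqref{def_D_xfu}, \eqref{def_S_xfu}, \eqref{DxnSx-u}, \eqref{basis-Dx-derivatives} and \eqref{expression-an}.

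First I would set $R_n:=\sum_{j=0}^{M}a_{j,n}\mathrm{S}_xP_{n-j}^{[k]}=\sum_{j=0}^{N}b_{j,n}Q_{n-j}^{[m]}$, which has degree $n$ because $\mathrm{S}_x$ preserves degrees; thus $(R_n)_{n\ge0}$, $(\mathrm{S}_xP_n^{[k]})_{n\ge0}$, $(P_n^{[k]})_{n\ge0}$ and $(Q_n^{[m]})_{n\ge0}$ are simple sets, with dual bases $({\bf\rho}_n)$, $(\sigma_n)$, $({\bf p}_n^{[k]})$, $({\bf q}_n^{[m]})$, and let $({\bf p}_n)$, $({\bf q}_n)$ be the dual bases of $(P_n)$, $(Q_n)$. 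Transposing the two banded expansions of $R_n$ in \eqref{general_problem} yields $\sigma_n=\sum_{j=0}^{M}a_{j,n+j}{\bf\rho}_{n+j}$ and ${\bf q}_n^{[m]}=\sum_{j=0}^{N}b_{j,n+j}{\bf\rho}_{n+j}$, while $\langle{\bf S}_x\sigma_n,P_m^{[k]}\rangle=\langle\sigma_n,\mathrm{S}_xP_m^{[k]}\rangle=\delta_{n,m}$ forces ${\bf S}_x\sigma_n={\bf p}_n^{[k]}$. Fixing $n$, the $M+N+2$ functionals $\sigma_n,\dots,\sigma_{n+N}$ and ${\bf q}_n^{[m]},\dots,{\bf q}_{n+M}^{[m]}$ all lie in the $(M+N+1)$-dimensional space $\mathrm{span}\{{\bf\rho}_n,\dots,{\bf\rho}_{n+M+N}\}$, so a nontrivial relation $\sum_{i=0}^{N}\lambda_{i,n}\sigma_{n+i}=\sum_{i=0}^{M}\mu_{i,n}{\bf q}_{n+i}^{[m]}$ exists; moreover $\lambda_{N,n}\neq0$ and $\mu_{M,n}\neq0$, for if $\lambda_{N,n}=0$ then comparing the ${\bf\rho}_{n+M+N}$-coefficients and using $b_{N,n+M+N}\neq0$ gives $\mu_{M,n}=0$, whereupon both sides sit in $\mathrm{span}\{{\bf\rho}_n,\dots,{\bf\rho}_{n+M+N-1}\}$, where $\sigma_n,\dots,\sigma_{n+N-1},{\bf q}_n^{[m]},\dots,{\bf q}_{n+M-1}^{[m]}$ have coordinate matrix $\mathcal{A}_{M+N}$ (the displayed matrix is this one for $n=0$; for $n\ge1$ its shift is used), whose invertibility makes the relation trivial --- a contradiction; the case $\mu_{M,n}\neq0$ is symmetric, via $a_{M,n+M+N}\neq0$.

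Next I would apply ${\bf S}_x$ to this relation and use ${\bf S}_x\sigma_{n+i}={\bf p}_{n+i}^{[k]}$ to obtain $\sum_{i=0}^{N}\lambda_{i,n}{\bf p}_{n+i}^{[k]}={\bf S}_x{\bf w}$ with ${\bf w}:=\sum_{i=0}^{M}\mu_{i,n}{\bf q}_{n+i}^{[m]}$, and then apply ${\bf D}_x^{k}$. By \eqref{basis-Dx-derivatives} and \eqref{expression-an} the left-hand side becomes $\psi_{N+k+n}\,{\bf u}$ with $\deg\psi_{N+k+n}=N+k+n$ (leading coefficient a nonzero multiple of $\lambda_{N,n}$). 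For the right-hand side, since $k\ge m$ I write ${\bf D}_x^{k}{\bf S}_x{\bf w}={\bf D}_x^{k-m}\big({\bf D}_x^{m}{\bf S}_x{\bf w}\big)$; again by \eqref{basis-Dx-derivatives} and \eqref{expression-an}, ${\bf D}_x^{m}{\bf w}=g\,{\bf v}$ with $\deg g=M+m+n$ (leading coefficient a nonzero multiple of $\mu_{M,n}$), so \eqref{DxnSx-u} with exponent $m$, followed by \eqref{def_S_xfu} and \eqref{def_D_xfu} to re-express everything through ${\bf D}_x{\bf v}$ and ${\bf S}_x{\bf v}$, rewrites ${\bf D}_x^{m}{\bf S}_x{\bf w}$ in the form $\phi\,{\bf D}_x{\bf v}+\rho\,{\bf S}_x{\bf v}$ with $\deg\phi\le M+m+n+1$, $\deg\rho\le M+m+n$; a short leading-term count --- the dominant contribution to $\phi$ being $\texttt{U}_2\,\mathrm{D}_xg$ with $\deg\texttt{U}_2=2$ --- gives $\deg\phi=M+m+n+1$ and $\deg\rho=M+m+n$. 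Equating the two computations of ${\bf D}_x^{k}{\bf S}_x{\bf w}$ is precisely \eqref{relation_with_dual_basis}.

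The main obstacle will be the step producing the finite relation with controlled extreme coefficients: its existence is only a dimension count, but the non-vanishing of $\lambda_{N,n}$ and $\mu_{M,n}$ --- on which the degrees asserted in \eqref{relation_with_dual_basis} rest --- is exactly what $\det(\mathcal{A}_{M+N})\neq0$ and $a_{M,n}b_{N,n}\neq0$ are used for, and aligning the index conventions of $\mathcal{A}_{M+N}$ with the dual-basis expansions (and handling general $n$) is the delicate bookkeeping point.
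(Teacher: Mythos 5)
Your overall strategy coincides with the paper's: both pass to the dual basis of the common sequence $R_n$, expand the dual functionals attached to $(P_n^{[k]})_{n\geq0}$ and $(Q_n^{[m]})_{n\geq0}$ in that basis, extract a finite linear relation between them, and then apply ${\bf D}_x^{k}$ together with \eqref{DxnSx-u}, \eqref{def_D_xfu}, \eqref{def_S_xfu}, \eqref{basis-Dx-derivatives} and \eqref{expression-an}; your identities $\sigma_n=\sum_{j}a_{j,n+j}\rho_{n+j}$, ${\bf q}_n^{[m]}=\sum_j b_{j,n+j}\rho_{n+j}$ and ${\bf S}_x\sigma_n={\bf p}_n^{[k]}$ are exactly the transposes of the paper's \eqref{expressions_ank_Sxbnm}, and your final degree count is the paper's. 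The one genuine gap is the step producing the finite relation for $n\geq 1$. You look for a relation supported on the sliding window, $\sum_{i=0}^{N}\lambda_{i,n}\sigma_{n+i}=\sum_{i=0}^{M}\mu_{i,n}{\bf q}_{n+i}^{[m]}$, and your argument that $\lambda_{N,n}\neq0$ invokes the invertibility of the shifted coordinate matrix of $\sigma_n,\dots,\sigma_{n+N-1},{\bf q}_n^{[m]},\dots,{\bf q}_{n+M-1}^{[m]}$ with respect to $\rho_n,\dots,\rho_{n+M+N-1}$. That invertibility is not a hypothesis of the lemma and does not follow from $\det(\mathcal{A}_{M+N})\neq0$: already for $M=N=1$ the window matrix at level $n$ has determinant $b_{1,n+1}-a_{1,n+1}$, which may vanish for some $n\geq1$ while $b_{1,1}-a_{1,1}\neq0$. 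When it vanishes, the nontrivial relation space you obtain by dimension count can consist entirely of relations with $\lambda_{N,n}=\mu_{M,n}=0$, so the relation you need, with nonzero extreme coefficients, may simply not exist --- and the degree assertions in \eqref{relation_with_dual_basis} rest precisely on those coefficients.

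The paper circumvents this by letting the relation grow with $n$ (its \eqref{equation_with_basis00} has sums from $0$ to $n+N$ and from $0$ to $n+M$, not over a window of fixed length). Concretely, $b_{N,M+N+n}{\bf a}_{n+N}^{[k]}-a_{M,M+N+n}{\bf S}_x{\bf b}_{n+M}^{[m]}$ lies, by the band structure, in $\mathrm{span}\{{\bf r}_n,\dots,{\bf r}_{n+M+N-1}\}$; the functionals ${\bf r}_0,\dots,{\bf r}_{M+N-1}$ are expressed through $\{{\bf a}_i^{[k]}\}_{i=0}^{N-1}\cup\{{\bf S}_x{\bf b}_j^{[m]}\}_{j=0}^{M-1}$ using $\det(\mathcal{A}_{M+N})\neq0$ once and for all, and every ${\bf r}_l$ with $l\geq M+N$ is then obtained recursively from ${\bf a}_{l-M}^{[k]}=\sum_{j=l-M}^{l}a_{j-l+M,j}{\bf r}_j$ using only $a_{M,l}\neq0$. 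The extra terms all carry lower indices, so the extreme coefficients remain $b_{N,M+N+n}$ and $a_{M,M+N+n}$, both nonzero, and the leading-term computation you describe goes through verbatim. Replacing your sliding-window relation by this growing relation repairs the argument; the rest of your proposal is sound and matches the paper.
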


\begin{proof}
Define 
$$R_n(z)= \sum_{j=0} ^{M} a_{j,n} P_{n-j} ^{[k]} (z)\;, ~~\quad n=0,1,\ldots\;.$$
Let $({\bf a}_n)_{n\geq 0}$, $({\bf b}_n)_{n\geq 0}$, $({\bf a}_n ^{[k]})_{n\geq 0}$, $({\bf b}_n ^{[m]})_{n\geq 0}$ and $({\bf r}_n)_{n\geq 0}$ be the associated dual basis to the sequences $(P_n)_{n\geq 0}$, $(Q_n)_{n\geq 0}$, $(P_n ^{[k]})_{n\geq 0}$, $(Q_n ^{[m]})_{n\geq 0}$ and $(R_n)_{n\geq 0}$, respectively.
We are going to prove that 
\begin{align}
{\bf a}_n ^{[k]} = \sum_{l=n} ^{n+M} a_{l-n,l}{\bf r}_l \;,\quad {\bf S}_x{\bf b}_n ^{[m]} = \sum_{l=n} ^{n+N} b_{l-n,l}{\bf r}_l\;,  \quad n=0,1,\ldots\;.\label{expressions_ank_Sxbnm}
\end{align}
Indeed, by definition of $R_n$, we obtain
\begin{align*}
\left\langle {\bf a}_n ^{[k]},R_l  \right\rangle=\sum_{i=0} ^M a_{i,l}\left\langle {\bf a}_n ^{[k]},P_{l-i} ^{[k]}  \right\rangle =\left\{
    \begin{array}{ll}
        a_{l-n,l}\;, & \mbox{if } n\leq l\leq n+M \;,\\
        0\;, & \mbox{otherwise\;.}
    \end{array}
\right.
\end{align*}
Similarly, using \eqref{general_problem}, we write
\begin{align*}
\left\langle {\bf S}_x {\bf b}_n ^{[m]},R_l  \right\rangle=\sum_{i=0} ^N b_{i,l}\left\langle {\bf b}_n ^{[m]},Q_{l-i} ^{[m]}  \right\rangle =\left\{
    \begin{array}{ll}
        b_{l-n,l}\;, & \mbox{if } n\leq l\leq n+N \;,\\
        0 \;,& \mbox{otherwise\;.}
    \end{array}
\right.
\end{align*}
Therefore \eqref{expressions_ank_Sxbnm} hold by writing 
$${\bf a}_n ^{[k]}=\sum_{l=0} ^{\infty} \left\langle  {\bf a}_n ^{[k]},R_l  \right\rangle {\bf r}_l \;,  \quad {\bf S}_x{\bf b}_n ^{[m]}=\sum_{l=0} ^{\infty} \left\langle  {\bf S}_x{\bf b}_n ^{[m]},R_l  \right\rangle {\bf r}_l\;, $$
and by using what is preceding. Taking $n=0,1,\ldots,N-1$ and $n=0,1,\ldots, M-1$ in the first and in the second equation of \eqref{expressions_ank_Sxbnm}, respectively, we obtain a system of equations whose matrix is $\mathcal{A}_{M+N}$ and since $det(\mathcal{A}_{M+N})\neq 0$, then we may write 
$$r_n= \sum_{i=0} ^{N-1} \widehat{a}_{n,i}{\bf a}_i ^{[k]} +\sum_{j=0} ^{M-1}\widehat{b}_{n,j}{\bf S}_x{\bf b}_j ^{[m]}\;,\quad
n=0,1,\ldots, M+N-1,$$
for some complex sequences $\widehat{a}_{n,i}$ and $\widehat{b}_{n,j}$. Even more we may also write
\begin{align}
\sum_{i=0} ^{n+N} a'_{n,i}{\bf a}_i ^{[k]} =\sum_{j=0} ^{n+M}b'_{n,j}{\bf S}_x{\bf b}_j ^{[m]}\;,\quad n=0,1,\ldots\;.\label{equation_with_basis00}
\end{align}
with $a'_{n,i}$ and $b'_{n,j}$ complex sequences with $a'_{n,n+N}=b_{N,M+N+n}$ and $b'_{n,n+M}=a_{M,M+N+n}$. In addition using successively  \eqref{DxnSx-u}, \eqref{basis-Dx-derivatives}, \eqref{def_D_xfu} and \eqref{def_S_xfu}, we obtain
\begin{align*}
&{\bf D}_x ^m {\bf S}_x {\bf b}_j ^{[m]}\\
&=\frac{\alpha_{m+1}}{\alpha} {\bf S}_x {\bf D}_x ^m {\bf b}_j ^{[m]} +\frac{\gamma_m}{\alpha}\texttt{U}_1{\bf D}_x ^{m+1}{\bf b}_j ^{[m]}\\
&= \frac{(-1)^m\gamma_{m+j}!}{\gamma_j!~\alpha \left\langle {\bf v}, Q_{m+j} ^2  \right\rangle }\Big( \alpha_{m+1}{\bf S}_x (Q_{m+j}{\bf v}) +\gamma_m \texttt{U}_1 {\bf D}_x (Q_{m+j} {\bf v}) \Big) \\
&=\frac{(-1)^m\gamma_{m+j}!}{\gamma_j!~\alpha \left\langle {\bf v}, Q_{m+j} ^2  \right\rangle }\Big[\Big(\Big(\alpha \alpha_{m+1}\texttt{U}_2 -(\alpha_{m+1}+\gamma_m)\frac{\texttt{U}_1 ^2}{\alpha}\Big)\mathrm{D}_xQ_{m+j} +\gamma_m\texttt{U}_1 \mathrm{S}_xQ_{m+j} \Big){\bf D}_x{\bf u} \\
&\quad \quad \quad \quad \quad \quad \quad+\Big(\alpha_{m+1}\mathrm{S}_xQ_{m+j} +(\alpha_{m+1}+\gamma_m)\frac{\texttt{U}_1}{\alpha}\mathrm{D}_xQ_{m+j} \Big){\bf S}_x{\bf u}\Big]\;.
\end{align*}
Now since $k\geq m$, we apply ${\bf D}_x ^k$ to \eqref{equation_with_basis00} using \eqref{basis-Dx-derivatives} to obtain
$$\sum_{i=0} ^{n+N}(-1)^k \frac{\gamma_{k+i}!}{\gamma_i !} a'_{n,i}{\bf a}_{k+i} ={\bf D}_x ^{k-m}\Big(\sum_{j=0} ^{n+M}b'_{n,j}{\bf D}_x ^m{\bf S}_x{\bf b}_j ^{[m]}\Big)\;,\quad n=0,1,\ldots\;. $$
Hence \eqref{relation_with_dual_basis} holds where
\begin{align*}
\psi_{N+k+n}(z)&=\sum_{i=0} ^{n+N} \frac{(-1)^k\gamma_{k+i}!}{\gamma_i !\left\langle {\bf u},P_{k+i} ^2 \right\rangle}a'_{n,i}P_{k+i}(z)\;, \\
\phi_{M+m+n+1}(z)&=\sum_{j=0} ^{n+M} \frac{(-1)^m\gamma_{m+j}!}{\gamma_j!~\alpha \left\langle {\bf v},Q_{m+j} ^2 \right\rangle}b'_{n,j}\Big(  \Big(\alpha \alpha_{m+1}\texttt{U}_2 -(\alpha_{m+1}+\gamma_m)\frac{\texttt{U}_1 ^2}{\alpha}\Big)\mathrm{D}_xQ_{m+j}(z) \\
&\quad \quad \quad \quad \quad\quad \quad \quad \quad \quad \quad+\gamma_m\texttt{U}_1 \mathrm{S}_xQ_{m+j} (z)\Big) \;,\\
\rho_{M+m+n}(z)&= \sum_{j=0} ^{n+M} \frac{(-1)^m\gamma_{m+j}!}{\gamma_j!~\alpha\left\langle {\bf v},Q_{m+j} ^2 \right\rangle}b'_{n,j} \Big(\alpha_{m+1}\mathrm{S}_xQ_{m+j} +(\alpha_{m+1}+\gamma_m)\frac{\texttt{U}_1}{\alpha}\mathrm{D}_xQ_{m+j} \Big).
\end{align*}
In addition, since $a'_{n,n+N}b'_{n,n+M}=b_{N,M+N+N}a_{M,M+N+n} \neq 0$ and
\begin{align*}
\alpha_{m+1}\alpha_{m+j} +\frac{\alpha^2 -1}{\alpha}(\alpha_{m+1}+\gamma_m)\gamma_{m+j}&=\alpha_{2m+j+1}\;,\\ \Big(\alpha\alpha_{m+1}-\frac{\alpha^2-1}{\alpha}(\alpha_{m+1}+\gamma_m) \Big)\gamma_{m+j}+\gamma_m\alpha_{m+j}&=\gamma_{2m+j}\;,
\end{align*}
we clearly have 
$\deg \psi_{N+k+n}=N+k+n$, $\deg \phi_{M+m+n+1}=M+m+n+1$ and $\deg \rho_{M+m+n}=M+m+n$. Thus the desired result follows.
\end{proof}
Let us now state the first result.
\begin{theorem}\label{main-result-general-for-m=k}
Let $({\bf u}, {\bf v})$ be a pair of regular functionals with respect to the pair of monic OPS $\big((P_n)_{n\geq 0},(Q_n)_{n\geq 0}\big)$. Assume that \eqref{general_problem} holds. Under the assumptions and conclusion of Lemma \ref{lemma1}, assume further that $m=k$,
\begin{align*}
\phi_{M+k+n+1}(z) \rho_{M+k+n+1}(z) - \phi_{M+k+n+2}(z) \rho_{M+k+n}(z)\neq 0\;,~ n=0,1,\ldots\;,
\end{align*}
and $\det (\mathcal{B}_4) \neq 0$, where $\mathcal{B}_4(z)=\Big[c_{i,j}(z)\Big]_{i,j=0} ^{3}$ is the following polynomial matrix of order four
\begin{align*}
c_{i,j}(z) = \left\{
\begin{array}{lclcl}
\displaystyle \texttt{U}_1\mathrm{D}_x\psi_{N+k+i}(z)-\alpha \mathrm{S}_x\psi_{N+k+i}(z), ~ \textit{if}~~j=0 \;, \\ [7pt]
\alpha\mathrm{S}_x\phi_{M+k+i+1}(z)-\texttt{U}_1(\mathrm{D}_x\phi_{M+k+i+1}(z)-K_{M+k+i}(z)), ~ \textit{if}~~j=1 \;,\\
\mathrm{D}_x\phi_{M+k+i+1}(z) +(2\alpha^2-1)K_{M+k+i}(z),~ \textit{if}~j=2\;, \\
\mathrm{D}_x \rho_{M+k+i}(z),~\textit{otherwise\;.}
\end{array}
\right. 
\end{align*}
where $K_{M+k+i}(z)=\mathrm{S}_x\rho_{M+k+i}(z)-\alpha^{-1}\texttt{U}_1\mathrm{D}_x\rho_{M+k+i} (z)$, for $i=0,1,2,3$.\\ \\
Then ${\bf u}$ and ${\bf v}$ are semiclassical functionals. That is, there exist four nonzero polynomials $\phi_1$, $\phi_2$, $\psi_1$ and $\psi_2$, such that
$$\phi_1{\bf D}_x{\bf u}=\psi_1{\bf S}_x{\bf u}\;,\quad \phi_2{\bf D}_x{\bf v}=\psi_2{\bf S}_x{\bf v} \;.$$  In addition, there exist two nonzero polynomials $\pi$ and $\rho$ such that $$\pi{\bf u}= \rho{\bf S}_x {\bf v}.$$
\end{theorem}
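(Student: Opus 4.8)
The plan is to start from the conclusion of Lemma~\ref{lemma1} specialized to $m=k$, namely
\begin{align*}
\psi_{N+k+n}{\bf u}= \phi_{M+k+n+1}{\bf D}_x{\bf v} +\rho_{M+k+n}{\bf S}_x{\bf v}\;,\qquad n=0,1,\ldots\;,
\end{align*}
since when $k=m$ the outer operator ${\bf D}_x^{k-m}$ disappears. The idea is to treat this as an overdetermined linear system in the two unknown functionals ${\bf D}_x{\bf v}$ and ${\bf S}_x{\bf v}$: writing the relation for $n$ and $n+1$ and using $\phi_{M+k+n+1}\rho_{M+k+n+1}-\phi_{M+k+n+2}\rho_{M+k+n}\neq 0$, I can solve (by Cramer's rule, inverting the $2\times 2$ polynomial matrix with that nonvanishing determinant) to express both ${\bf D}_x{\bf v}$ and ${\bf S}_x{\bf v}$ as \emph{rational multiples} of ${\bf u}$; clearing denominators yields polynomials $A,B,\pi,\rho$ with $A\,{\bf D}_x{\bf v}=B\,{\bf u}$ and $\pi\,{\bf u}=\rho\,{\bf S}_x{\bf v}$. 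In particular this already gives the last assertion $\pi{\bf u}=\rho{\bf S}_x{\bf v}$, and combined with $A\,{\bf D}_x{\bf v}=B\,{\bf u}$ (times $\rho$) produces a relation of the form $\widehat\phi\,{\bf D}_x{\bf v}=\widehat\psi\,{\bf S}_x{\bf v}$ after eliminating ${\bf u}$, which is the semiclassical equation for ${\bf v}$ once one checks $\widehat\phi,\widehat\psi$ are not both zero (this is where regularity of ${\bf v}$ and the degree bookkeeping from Lemma~\ref{lemma1} enter, via the nonvanishing leading coefficients $a_{M,\cdot}b_{N,\cdot}\neq 0$).

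Next I would derive the semiclassical equation for ${\bf u}$. Applying ${\bf D}_x$ to $\psi_{N+k+n}{\bf u}=\phi_{M+k+n+1}{\bf D}_x{\bf v}+\rho_{M+k+n}{\bf S}_x{\bf v}$ and using the product rules \eqref{def_D_xfu} and \eqref{def_S_xfu} to push ${\bf D}_x$ through the right-hand side, together with the relation \eqref{def-fD_x-u}, converts everything into an expression in ${\bf D}_x{\bf v}$ and ${\bf S}_x{\bf v}$ with \emph{polynomial} coefficients; similarly applying ${\bf S}_x$ via \eqref{DxnSx-u} (the case $n=0$) and the same product rules. Doing this for two consecutive indices, I obtain four relations expressing ${\bf D}_x({\psi_{N+k+i}}{\bf u})$ and ${\bf S}_x(\psi_{N+k+i}{\bf u})$ — equivalently, after using \eqref{def_D_xfu}, \eqref{def_S_xfu} to expand the left sides too, combinations of ${\bf D}_x{\bf u}$ and ${\bf S}_x{\bf u}$ — in terms of ${\bf D}_x{\bf v}$ and ${\bf S}_x{\bf v}$, whose coefficients are exactly the entries $c_{i,j}(z)$ of the matrix $\mathcal{B}_4$ (the particular form of $c_{i,j}$, with $\texttt{U}_1$, $\alpha$, the $2\alpha^2-1$ factor and $K_{M+k+i}$, is precisely what the product formulas produce). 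Since $\det\mathcal{B}_4\neq 0$, the $4\times 4$ polynomial system can be solved: ${\bf D}_x{\bf u}$ and ${\bf S}_x{\bf u}$ are rational multiples of a common functional (they can all be expressed through, say, ${\bf D}_x{\bf v}$), and clearing denominators gives $\phi_1{\bf D}_x{\bf u}=\psi_1{\bf S}_x{\bf u}$ with $\phi_1,\psi_1$ polynomials not both zero — again using regularity of ${\bf u}$ to rule out the degenerate case and to guarantee $\psi_1\neq 0$.

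I expect the main obstacle to be the bookkeeping that guarantees the resulting polynomials $\phi_1,\psi_1$ (and $\phi_2,\psi_2,\pi,\rho$) are genuinely nonzero rather than trivially vanishing — i.e.\ that the elimination does not collapse. This requires tracking leading coefficients through the product-rule expansions: one must verify, using \eqref{Dx-xnSx-xn-quadratic} (or its $q$-analogue) for the action of $\mathrm{D}_x,\mathrm{S}_x$ on monomials and the explicit degrees $\deg\psi_{N+k+n}=N+k+n$, $\deg\phi_{M+k+n+1}=M+k+n+1$, $\deg\rho_{M+k+n}=M+k+n$ from Lemma~\ref{lemma1}, that the top-degree terms in the Cramer determinants do not cancel; here the hypotheses $a_{M,n}b_{N,n}\neq0$, $\det\mathcal{A}_{M+N}\neq0$, $\det\mathcal{B}_4\neq0$ and the $2\times2$ nonvanishing condition are exactly what is needed. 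A secondary technical point is justifying that a relation $X\,{\bf u}=0$ with $X$ a nonzero polynomial forces a contradiction with regularity of ${\bf u}$ (the standard fact that a regular functional is not annihilated by any nonzero polynomial), which lets one cancel common polynomial factors legitimately and conclude the semiclassical character; the remaining computations are the routine expansions via \eqref{def_D_xfu}--\eqref{DxnSx-u}.
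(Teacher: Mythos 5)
Your overall strategy coincides with the paper's: first use the $n=0$ and $n=1$ instances of \eqref{relation_with_dual_basis} (with $k=m$, so the outer ${\bf D}_x^{k-m}$ disappears) as a $2\times2$ system in ${\bf D}_x{\bf v}$ and ${\bf S}_x{\bf v}$, invert it via the assumed nonvanishing $2\times2$ determinant to obtain $\pi{\bf u}=\rho{\bf S}_x{\bf v}$ and $\phi_2{\bf D}_x{\bf v}=\psi_2{\bf S}_x{\bf v}$, and then apply ${\bf D}_x$ to \eqref{relation_with_dual_basis} and invert a $4\times4$ polynomial system with matrix $\mathcal{B}_4$ to get the semiclassical equation for ${\bf u}$. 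Two details of your description of the second step, however, do not match the system that the hypothesis $\det(\mathcal{B}_4)\neq0$ actually refers to. First, the rows of $\mathcal{B}_4$ are indexed by $i=0,1,2,3$ and involve $\psi_{N+k+i}$, $\phi_{M+k+i+1}$, $\rho_{M+k+i}$ for four \emph{consecutive} indices, so the four equations arise from applying ${\bf D}_x$ \emph{once} to \eqref{relation_with_dual_basis} for $n=0,1,2,3$ --- not, as you propose, from applying both ${\bf D}_x$ and ${\bf S}_x$ to two consecutive indices; the latter would produce a different matrix whose invertibility is not among the hypotheses. Second, after applying ${\bf D}_x$ and using \eqref{def_D_xfu}, \eqref{def-fD_x-u} and \eqref{DxnSx-u}, the right-hand side is not ``an expression in ${\bf D}_x{\bf v}$ and ${\bf S}_x{\bf v}$'': the unknown column vector is $\big({\bf D}_x{\bf u},\,{\bf D}_x^2{\bf v},\,{\bf S}_x{\bf D}_x{\bf v},\,{\bf S}_x^2{\bf v}\big)^{T}$, while ${\bf S}_x{\bf u}$ appears on the left-hand side multiplied by $\mathrm{D}_x\psi_{N+k+i}$; Cramer's rule applied to the first unknown then yields directly $B\,{\bf D}_x{\bf u}=\psi_1\,{\bf S}_x{\bf u}$ with $B=\det(\mathcal{B}_4)\neq0$, so there is no need to route through expressing ${\bf D}_x{\bf u}$ and ${\bf S}_x{\bf u}$ as rational multiples of ${\bf D}_x{\bf v}$. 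With these two corrections your argument is exactly the paper's proof.
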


\begin{proof}
Since $k=m$, taking $n=0$ and $n=1$ in \eqref{relation_with_dual_basis}, we obtain
\begin{align*}
\psi_{N+k}{\bf u}&=\phi_{M+k+1}{\bf D}_x{\bf v} +\rho_{M+k}{\bf S}_x{\bf v}  \;,\\
\psi_{N+k+1}{\bf u}&=\phi_{M+k+2}{\bf D}_x{\bf v} +\rho_{M+k+1}{\bf S}_x{\bf v} \;.\\
\end{align*}
The determinant of the above system does not vanish identically by assumption, and so we have $\phi_2{\bf D}_x{\bf v}=\psi_2{\bf S}_x{\bf v}$ and $\pi{\bf u}= \rho{\bf S}_x {\bf v}$, where
$$\phi_2=\phi_{M+k+1}\psi_{N+k+1}-\phi_{M+k+2}\psi_{N+k}, ~\psi_2 =\rho_{M+k+1}\psi_{N+k} -\rho_{M+k} \psi_{N+k+1}\;,$$ $$\pi=\phi_{M+k+2}\psi_{N+k} -\phi_{M+k+1} \psi_{N+k+1},~ \rho=\phi_{M+k+2}\rho_{M+k}-\phi_{M+k+1}\rho_{M+k+1}.$$
Now we apply ${\bf D}_x$ to \eqref{relation_with_dual_basis} using \eqref{def_D_xfu} and \eqref{DxnSx-u} to obtain
\begin{align*}
\mathrm{D}_x\psi_{N+k+n}{\bf S}_x{\bf u}&=\Big(\texttt{U}_1\mathrm{D}_x\psi_{N+k+n}-\alpha \mathrm{S}_x\psi_{N+k+n} \Big) {\bf D}_x{\bf u} \\
&+\Big(\alpha\mathrm{S}_x\phi_{M+k+n+1}-\texttt{U}_1(\mathrm{D}_x\phi_{M+k+n+1}-K_{M+k+n}) \Big) {\bf D}_x ^2{\bf v} \\
&+\Big(\mathrm{D}_x\phi_{M+k+n+1} +(2\alpha^2-1)K_{M+k+n}  \Big) {\bf S}_x{\bf D}_x{\bf v} \\
&+\mathrm{D}_x\rho_{M+k+n} {\bf S}_x ^2 {\bf v}\;,
\end{align*}
for $n=0,1,\ldots$. Taking $n=0,1,2,3$, we obtain the following system 
\begin{align*}
\begin{bmatrix}
\mathrm{D}_x\psi_{N+k}{\bf S}_x{\bf u}\\
\mathrm{D}_x\psi_{N+k+1}{\bf S}_x{\bf u}\\
\mathrm{D}_x\psi_{N+k+2}{\bf S}_x{\bf u}\\
\mathrm{D}_x\psi_{N+k+3}{\bf S}_x{\bf u}
\end{bmatrix} =\mathcal{B}_4 \begin{bmatrix}
{\bf D}_x{\bf u}\\
{\bf D}_x ^2{\bf v}\\
{\bf S}_x{\bf D}_x{\bf v}\\
{\bf S}_x ^2{\bf v}
\end{bmatrix}\;.
\end{align*}
Since $B(z)=\det (\mathcal{B}_4(z))\neq 0$, then we can solve this system for ${\bf D}_x{\bf u}$, that is, there exists a nonzero polynomial $\psi_1$  such that $B {\bf D}_x{\bf u}=\psi_1{\bf S}_x{\bf u}$. The proof is done.
\end{proof}

\begin{remark}
It is important to notice that if $k>m$ in \eqref{general_problem}, then we can apply ${\bf D}_x$ to \eqref{relation_with_dual_basis} and proceed as in the previous proof to show that, under some assumptions, ${\bf u}$ is semiclassical but we can not insure that ${\bf v}$ is also semiclassical. We emphasize that the structure relation considered here has no link with the notion of coherence pair of measures and so this explain why our results are different. Regarding this we remind that the idea behind our considered relation was to characterize such OPS and therefore generalize some known results as mentioned in the introduction. Finding possible examples and connection with the so-called Sobolev OPS are not in the scope of this note and may lead to a possible future direction. In what follows we analyse closely some particular cases.
\end{remark}

For our purpose the following result is helpful.

\begin{theorem}\cite[Theorem 3.6]{KJ2018}\label{KJ_to_use_2018}
The Sturm-Liouville type equation
\begin{align}\label{Second_order_structure_relation}
\phi(z)\mathrm{D}_x ^2\, Y(z)+\psi(z) \mathrm{S}_x \mathrm{D}_x \, Y(z)=\lambda_n\, Y(z)\;,
\end{align}
where $\phi$ and $\psi$ are polynomials of degree at most two and one, respectively, and $\lambda_n$ is a constant, has a polynomial solution $P_n(z)$, of degree $n=0,1,\ldots$, for the lattice $z=4\beta s^2 +\mathfrak{c}_5s$, $\beta \neq 0$, if and only if up to a multiplicative constant, $P_n$ is the continuous dual Hahn polynomial or the Wilson polynomial. 
\end{theorem}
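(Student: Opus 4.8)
The implication ``$\Leftarrow$'' is a direct verification: the Wilson polynomials satisfy a second--order divided--difference equation of hypergeometric type which, on the quadratic lattice $z=4\beta s^{2}+\mathfrak c_{5}s$, takes exactly the form \eqref{Second_order_structure_relation}, and the continuous dual Hahn polynomials arise as a confluent limit satisfying an equation of the same shape; so one only needs to exhibit $\phi$, $\psi$ and $\lambda_n$ explicitly for each family and check. The substance is the converse, and the plan is to show that the existence of a polynomial eigenfunction of every degree forces the orthogonality functional to be classical on the lattice, and then to quote the classification of such functionals.

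First I would pin down the eigenvalue. Writing $\phi(z)=\mathfrak a z^{2}+\mathfrak b z+\mathfrak c$ and $\psi(z)=\mathfrak d z+\mathfrak e$, inserting $Y=P_n=z^{n}+\cdots$ into \eqref{Second_order_structure_relation} and using the expansions \eqref{Dx-xnSx-xn-quadratic}, the coefficient of $z^{n}$ gives
\begin{equation*}
\lambda_n=\mathfrak a\,n(n-1)+\mathfrak d\,n=n\big((n-1)\mathfrak a+\mathfrak d\big).
\end{equation*}
The existence of a polynomial solution of each degree $n$ forces the admissibility condition $\lambda_n\neq\lambda_k$ for $0\le k<n$, i.e. $(n+k-1)\mathfrak a+\mathfrak d\neq0$; in particular $\mathfrak a,\mathfrak d$ are not both zero. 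Consequently the operator $L:=\phi\,\mathrm D_x^{2}+\psi\,\mathrm S_x\mathrm D_x$ maps the space of polynomials of degree $\le n$ into itself with the $n+1$ distinct eigenvalues $\lambda_0,\ldots,\lambda_n$, whose eigenpolynomials are precisely $P_0,\ldots,P_n$.

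Next I would produce the orthogonality functional. Using the transposed product rules \eqref{def_D_xfu}, \eqref{def_S_xfu} together with the commutation formula \eqref{DxnSx-u}, one computes the action of $L$ on $\mathcal P^{*}$ and checks that $L$ is symmetrizable: there is a functional $\mathbf u$, unique up to a constant, satisfying a Pearson--type equation
\begin{equation*}
\mathbf D_x(\Phi\,\mathbf u)=\Psi\,\mathbf S_x\mathbf u,\qquad \deg\Phi\le2,\quad\deg\Psi=1,
\end{equation*}
with $\Phi,\Psi$ determined from $\phi,\psi$ through $\texttt{U}_1$, $\texttt{U}_2$ and $\alpha$, and such that $\langle\mathbf u,(Lf)g\rangle=\langle\mathbf u,f(Lg)\rangle$ for all $f,g\in\mathcal P$. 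Combining this symmetry with $LP_n=\lambda_n P_n$ and $\lambda_n\neq\lambda_m$ ($n\neq m$) yields $\langle\mathbf u,P_nP_m\rangle=0$ for $n\neq m$; a separate check that the diagonal moments do not vanish (equivalently $C_n\neq0$ in the recurrence \eqref{TTRR_relation}--\eqref{TTRR_coefficients}, which follows from admissibility) shows $\mathbf u$ is regular and $(P_n)_{n\ge0}$ is an OPS whose functional satisfies the above Pearson equation. It then remains to invoke the classification of such functionals on the lattice $z=4\beta s^{2}+\mathfrak c_{5}s$: up to an affine change of variable they are exactly those attached to the Wilson polynomials (when $\deg\Phi=2$) and to the continuous dual Hahn polynomials (when $\deg\Phi\le1$), the admissibility constraints on $(\mathfrak a,\mathfrak d)$ translating into the usual regularity constraints on the parameters. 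This identifies $P_n$ up to a multiplicative constant with one of the two families.

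The main obstacle is the middle step: constructing $\mathbf u$ and establishing the Pearson equation \emph{genuinely}, that is, (i) translating the pair $(\phi,\psi)$ of the Sturm--Liouville equation into a legitimate Pearson pair $(\Phi,\Psi)$ via the twisted Leibniz rules \eqref{def_D_xfu}--\eqref{DxnSx-u} without losing regularity, and (ii) disposing of the degenerate sub-cases (for instance $\Phi$ dropping to degree $0$, or parameter collisions forcing some $C_n=0$), which is precisely the delicate case analysis carried out in \cite{KJ2018}.
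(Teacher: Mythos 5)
First, a point of comparison: the paper does not prove this statement at all. It is imported verbatim, with citation, from Kenfack-Nangho and Jordaan \cite[Theorem 3.6]{KJ2018}, so there is no in-paper argument to measure yours against; the authors use it as a black box (e.g.\ at the end of the proof of Theorem \ref{propo-sol-quadratic}). Judged on its own, your sketch follows the standard strategy for such classification results (eigenvalue computation, symmetrization, Pearson equation, classification), the easy direction is indeed a verification, and the formula $\lambda_n=n\big((n-1)\mathfrak a+\mathfrak d\big)$ obtained from \eqref{Dx-xnSx-xn-quadratic} is correct. But the decisive steps are asserted rather than proved, and two of them are genuine gaps rather than routine omissions.

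(i) Your claim that the existence of a polynomial eigenfunction of every degree forces $\lambda_n\neq\lambda_k$ for $0\le k<n$ does not follow. In the monomial basis the operator $L=\phi\,\mathrm D_x^{2}+\psi\,\mathrm S_x\mathrm D_x$ is triangular on $\mathcal P$ with diagonal entries $\lambda_0,\lambda_1,\ldots$, and a collision $\lambda_n=\lambda_k$ is compatible with the existence of a degree-$n$ eigenpolynomial (the eigenspace may simply be larger, or the would-be obstruction may vanish); admissibility must be hypothesized or extracted from regularity, not from bare existence. (ii) More seriously, your terminal step --- ``invoke the classification of such functionals on the lattice $z=4\beta s^{2}+\mathfrak c_{5}s$'' --- is essentially the theorem you are trying to prove. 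Passing from a Pearson equation $\mathbf D_x(\Phi\,\mathbf u)=\mathbf S_x(\Psi\,\mathbf u)$ to the conclusion that $(P_n)_{n\ge0}$ is, up to affine changes of variable and multiplicative constants, a continuous dual Hahn or Wilson family requires computing the recurrence coefficients explicitly (as in \eqref{Bn-Cn-quadratic}), reparametrizing them, and matching them against the known $B_n$, $C_n$ of those families, including the degenerate subcases where $\deg\Phi$ drops or parameters collide; that computation is exactly the content of \cite[Theorem 3.6]{KJ2018}, and a special instance of it is what occupies the entire proof of Corollary \ref{main-thm-quadratic-2} in Section \ref{Special_case}. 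As written, your argument reduces the theorem to itself. Either carry out that identification of recurrence coefficients, or do what the paper does and cite \cite{KJ2018} outright.
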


The following result is the analogue of \cite[Theorem 3.1]{DMAZ2022} for quadratic lattices.
\begin{theorem}\label{propo-sol-quadratic}
Consider the lattice $z=x(s):=4\beta s^2+\mathfrak{c}_5s$, with $(\beta,\mathfrak{c}_5)\neq (0,0)$. Let $(P_n)_{n\geq 0}$ be a monic OPS with respect to ${\bf u}\in \mathcal{P}^*$. Assume that the following equation holds
\begin{align}\label{equation-exple-deg-is-two}
(az^2+bz+c)\mathrm{D}_xP_n =a_n\mathrm{S}_x P_{n+1}+b_n\mathrm{S}_x P_n +c_n\mathrm{S}_x P_{n-1}\;,\quad n=0,1,\ldots\;,
\end{align}
with $c_n\neq 0$ for $n=0,1,\ldots$, where the constant parameters $a$, $b$ and $c$ are chosen such that 
\begin{align}
6aC_2C_3+2(1+a^{-1})r_3\big(C_1-\frac{\mathfrak{c}_5^2}{4} \big)+r_3\Big((B_1-B_0)^2 -8\beta (B_0+B_1-2\beta )-2C_2 \Big)=0\;,\label{condition-case-deg-is-two-1-quadratic}
\end{align}
whenever $a\neq 0$, and  
\begin{align}
aC_2C_3\Big(b_2+2aB_2+b' \Big)-r_3\Big(a(B_2+B_1)C_2 +b'C_2 -\frac{r_2}{2}(B_1-B_0) \Big)=0\;, \label{condition-case-deg-is-two-2-quadratic}
\end{align}
with $r_i=c_i+2aC_i$, $i=2,3$ and $b'=b+2a\beta$.
Then $(P_n)_{n\geq 0}$ are the multiple of continuous dual Hahn polynomials or Wilson polynomials or special or limiting cases of them. 
Moreover \eqref{Second_order_structure_relation} holds with
\begin{align}
&\phi(z)=(B_0-z)(\mathfrak{a}z+\mathfrak{b}-\mathfrak{a}B_1) -(\mathfrak{a}+1)C_1,~ \psi(z)=z-B_0,~\lambda_n=n(1+(n-1)\mathfrak{a})\;,\label{expression-phi-psi-general-case-quadratic}
\end{align}
where
\begin{align*}
&\mathfrak{a}:=\frac{aC_3}{r_3C_1}\;,~~\mathfrak{b}:=\frac{1}{2}(B_1-B_0+4\beta) \;;
\end{align*}
being $B_0$, $B_1$, $C_1$, $C_2$ and $C_3$ are coefficients for the TTRR relation \eqref{TTRR_relation} satisfied by $(P_n)_{n\geq 0}$.
\end{theorem}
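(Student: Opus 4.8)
The plan is to reduce everything to the Sturm--Liouville equation \eqref{Second_order_structure_relation} and then invoke Theorem \ref{KJ_to_use_2018}. The identification of $(P_n)_{n\ge0}$ is an immediate consequence of \eqref{Second_order_structure_relation}: once that relation holds with $\phi$ of degree at most two, $\psi$ of degree at most one and $\lambda_n$ constant, then --- $P_n$ being, for every $n$, a polynomial solution of degree $n$ --- Theorem \ref{KJ_to_use_2018} applies verbatim when $\beta\neq0$ and gives that $P_n$ is a multiple of a continuous dual Hahn or of a Wilson polynomial; the degenerate situations $\beta=0$ (linear lattice) and $a=0$ (where $\mathfrak a=0$ and $\phi$ is linear) are the announced special/limiting instances, recovered by a limiting argument or by the classical Bochner-type classification on the linear lattice. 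So the whole task is to derive \eqref{Second_order_structure_relation} with $\phi$, $\psi$, $\lambda_n$ exactly as in \eqref{expression-phi-psi-general-case-quadratic}.

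\emph{Step 1: the sequences $a_n,b_n,c_n$.} Write $A(z)=az^2+bz+c$. Comparing the three highest Fourier coefficients of $A\,\mathrm{D}_xP_n$ and of $a_n\mathrm{S}_xP_{n+1}+b_n\mathrm{S}_xP_n+c_n\mathrm{S}_xP_{n-1}$ in the basis $(P_j)$ --- using the expansions \eqref{Dx-xnSx-xn-quadratic} with $v_n=\beta n(n-1)(2n-1)/3$, $\widehat v_n=\beta n(2n-1)$, and the recurrence \eqref{TTRR_relation} to express the subleading coefficients of $P_n$ through $B_j,C_j$ --- one gets $a_n=an$ when $a\neq0$, together with closed formulas for $b_n,c_n$ in terms of $a,b,c,B_j,C_j$. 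Taking \eqref{equation-exple-deg-is-two} at two consecutive indices and subtracting produces first-order recurrences for $b_n$ and $c_n$; the hypotheses \eqref{condition-case-deg-is-two-1-quadratic}--\eqref{condition-case-deg-is-two-2-quadratic}, which involve only $a,b,c$ and $B_0,B_1,B_2,C_1,C_2,C_3$, are exactly the initial-data constraints that force the solutions of those recurrences to be the polynomial-in-$n$ sequences compatible with a second-order equation, the relevant combinations being $r_i=c_i+2aC_i$, $b'=b+2a\beta$, $\mathfrak a=aC_3/(r_3C_1)$ and $\mathfrak b=\tfrac12(B_1-B_0+4\beta)$.

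\emph{Step 2: the Sturm--Liouville equation.} Apply $\mathrm{D}_x$ to \eqref{equation-exple-deg-is-two}. By \eqref{def-Dx-fg} and $\mathrm{D}_xz^2=2z+2\beta$, the left-hand side becomes $(2az+b')\,\mathrm{S}_x\mathrm{D}_xP_n+\mathrm{S}_xA\cdot\mathrm{D}_x^2P_n$; for the right-hand side use the $q=1$ commutation $\mathrm{D}_x\mathrm{S}_x=\mathrm{S}_x\mathrm{D}_x+\texttt{U}_1\mathrm{D}_x^2$ on $\mathcal P$ (the polynomial counterpart of \eqref{DxnSx-u}, with $\texttt{U}_1\equiv2\beta$) together with $\mathrm{D}_xz=1$ and $\mathrm{S}_xz=z+\beta$. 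This gives a combination of $\mathrm{S}_x\mathrm{D}_xP_{n\pm1}$, $\mathrm{D}_x^2P_{n\pm1}$, $\mathrm{S}_xP_n$ and $P_{n\pm1}$, which I would collapse by invoking \eqref{equation-exple-deg-is-two} again (to trade $\mathrm{S}_xP_{n\pm1}$ for $A\,\mathrm{D}_xP_{n\pm1}$, hence eventually for $z$-multiples of $\mathrm{D}_xP_n$) and the recurrence \eqref{TTRR_relation} (to trade the remaining shifted polynomials and their $\mathrm{D}_x$-images). What survives is a relation $\Phi_n(z)\mathrm{D}_x^2P_n+\Psi_n(z)\mathrm{S}_x\mathrm{D}_xP_n=\Lambda_n(z)P_n$; Step~1 is precisely what forces $\deg\Phi_n\le2$, $\deg\Psi_n\le1$ and $\Lambda_n$ to be a genuine constant, all independent of $n$ apart from $\Lambda_n=n(1+(n-1)\mathfrak a)$, and reading off the coefficients identifies $\Phi_n$, $\Psi_n$, $\Lambda_n$ with the $\phi$, $\psi$, $\lambda_n$ of \eqref{expression-phi-psi-general-case-quadratic}. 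Theorem \ref{KJ_to_use_2018} then concludes.

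\emph{Main obstacle.} The genuine work lies in the elimination of Step~2, and in particular in verifying that the would-be cubic and quartic terms of $\Phi_n$, the quadratic term of $\Psi_n$ and all the $z$-dependence of $\Lambda_n$ cancel exactly under \eqref{condition-case-deg-is-two-1-quadratic}--\eqref{condition-case-deg-is-two-2-quadratic}: this is a long but mechanical computation based on \eqref{def-Dx-fg}--\eqref{DxnSx-u} and the subleading coefficients in \eqref{Dx-xnSx-xn-quadratic}. A minor additional point is to handle the degenerate cases $a=0$ (where $A$ is linear, $\mathfrak a=0$, and only \eqref{condition-case-deg-is-two-2-quadratic} is imposed) and $\beta=0$ separately, as limiting instances of the above.
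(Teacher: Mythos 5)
Your endgame (reach \eqref{Second_order_structure_relation} and invoke Theorem \ref{KJ_to_use_2018}) coincides with the paper's, and your Step 1 matches the paper's coefficient identifications ($a_n=na$, the closed form of $b_n$, the value of $c_1$ from $n=1$). The gap is in Step 2, which is where all the content lies. Applying $\mathrm{D}_x$ to \eqref{equation-exple-deg-is-two} produces $\mathrm{S}_x\mathrm{D}_xP_{n\pm1}$ and $\mathrm{D}_x^2P_{n\pm1}$ on the right-hand side, and the proposed ``collapse'' of these into $\mathrm{D}_x^2P_n$, $\mathrm{S}_x\mathrm{D}_xP_n$ and $P_n$ is asserted, not shown. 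The TTRR gives $\mathrm{D}_xP_{n+1}=\mathrm{S}_xP_n+(z+\beta-B_n)\mathrm{D}_xP_n-C_n\mathrm{D}_xP_{n-1}$, which reintroduces $\mathrm{S}_xP_n$ and cascades through all lower indices, while trading $\mathrm{S}_xP_{n\pm1}$ back via \eqref{equation-exple-deg-is-two} would require dividing by $az^2+bz+c$ and does not yield polynomial coefficients. Nothing in your sketch shows that this elimination terminates with an $n$-independent $\phi$ of degree $\le 2$ and $\psi$ of degree $\le 1$, nor where the hypotheses \eqref{condition-case-deg-is-two-1-quadratic}--\eqref{condition-case-deg-is-two-2-quadratic} actually enter beyond the claim that they ``are exactly the initial-data constraints''.

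The paper avoids this entirely by working in the dual space. Writing $\pi_2(z)=az^2+bz+c$, it uses \eqref{def-fDxg} to recast $\pi_2\mathrm{D}_xP_n$ as $\mathrm{D}_x[(\mathrm{S}_x\pi_2-2\beta\mathrm{D}_x\pi_2)P_n]-\mathrm{S}_x(\mathrm{D}_x\pi_2\,P_n)$, converts \eqref{equation-exple-deg-is-two} into finite relations between the dual bases $({\bf a}_n)$, $({\bf a}_n^{[1]})$ and the dual basis of the auxiliary simple set $Q_n=\sum_{j=n-2}^{n}a_{n-1,j}P_j$, and shows that \eqref{condition-case-deg-is-two-1-quadratic}--\eqref{condition-case-deg-is-two-2-quadratic} are precisely what makes this system close up into the single identity ${\bf S}_x{\bf a}_0^{[1]}={\bf a}_0+\frac12(B_1-B_0){\bf a}_1+\frac{A}{3a}{\bf a}_2$. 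Applying ${\bf D}_x$ to that identity and using \eqref{expression-an} and \eqref{TTRR_coefficients} yields the Pearson-type equation ${\bf D}_x(\phi{\bf u})={\bf S}_x(\psi{\bf u})$ with $\phi,\psi$ as in \eqref{expression-phi-psi-general-case-quadratic}; the equivalence of this functional equation with \eqref{Second_order_structure_relation} holding for all $n$ is then quoted from \cite[Theorem 5]{FK-NM2011}. That equivalence theorem is the missing ingredient in your plan: without the functional-equation detour your direct polynomial-level elimination would in effect have to reprove it, and as sketched it does not. To repair the argument, either carry out Step 2 at the level of ${\bf u}$ as the paper does, or first establish the Pearson equation and then cite the equivalence (a)$\Leftrightarrow$(c) of \cite[Theorem 5]{FK-NM2011}.
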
  

\begin{proof}
Let $(P_n)_{n\geq 0}$ be a monic OPS with respect to the functional ${\bf u} \in \mathcal{P}^*$ and satisfying \eqref{equation-exple-deg-is-two}. For the quadratic lattice $z=x(s)=4\beta s^2+\mathfrak{c}_5s$, if we set $\pi_2(z):=az^2+bz+c$, then by using \eqref{def-fDxg}, we obtain
\begin{align}\label{start-point-01}
\pi_2\mathrm{D}_x P_n= \mathrm{D}_x \left[ \Big(\mathrm{S}_x\pi_2 -2\beta \mathrm{D}_x\pi_2 \Big)P_n\right] -\mathrm{S}_x\Big( \mathrm{D}_x\pi_2~P_n  \Big)\;.
\end{align}
By direct computations we have
\begin{align*}
&\mathrm{D}_x \pi_2(z)=2 a(z+\beta)+b,~~\mathrm{S}_x\pi_2(z)=az^2+(b+6a\beta)z+\pi_2(\beta)+a\mathfrak{c}_5^2/4\;.
\end{align*} 
Hence from \eqref{start-point-01}, using \eqref{TTRR_relation} together with \eqref{equation-exple-deg-is-two}, we obtain
\begin{align*}
\sum_{j=n-1} ^{n+1} a_{n,j}\mathrm{S}_xP_j(z)=\sum_{j=n-3} ^{n+1} b_{n,j}P_{j} ^{[1]}(z) \;,\quad \quad n=0,1,\ldots\;,
\end{align*}
where
\begin{align*}
&a_{n,n+1}=2a+a_n,~a_{n,n}=b_n+2aB_n +b+2a\beta ,~a_{n,n-1}=c_n+2aC_n,\\
&b_{n,n+1}=a(n+2), ~b_{n,n}=(n+1)\Big(a(B_{n+1}+B_n)+b+2a\beta  \Big),\\
&b_{n,n-2}=(n-1)C_n\Big(a(B_{n-1}+B_n)+b+2a\beta  \Big),~b_{n,n-3}=a(n-2)C_nC_{n-1}\;,\\
&b_{n,n-1}=n\left[a\Big(C_{n+1}+B_n^2+C_n \Big)+(b+2a\beta)(B_n-\beta)\right. +c-a\Big(\beta^2-\frac{\mathfrak{c}_5 ^2}{4}  \Big)      \Big]\;.
\end{align*}
Assume $a\neq 0$ and define $$Q_n(z):=\sum_{j=n-2} ^n a_{n-1,j}P_j(z)\;,~~\quad n=0,1,\ldots\;.$$ Then $(Q_n)_{n\geq 0}$ is a simple set of polynomials and so let $({\bf a}_n)_{n\geq 0}$, $({\bf a}_n ^{[1]})_{n\geq 0}$ and $({\bf r}_n)_{n\geq 0}$ be the associated basis to the sequences $(P_n)_{n\geq 0}$, $(P_n ^{[1]})_{n\geq 0}$ and $(Q_n)_{n\geq 0}$, respectively. We then obtain
\begin{align}
&{\bf a}_n=a_{n-1,n}{\bf r}_n+a_{n,n}{\bf r}_{n+1}+a_{n+1,n}{\bf r}_{n+2}\;;\label{basis-relation-an-with-rn-1}\\
&{\bf \mathrm{S}}_x {\bf a}_n ^{[1]}=b_{n-1,n}{\bf r}_n+b_{n,n}{\bf r}_{n+1}+b_{n+1,n}{\bf r}_{n+2}+b_{n+2,n}{\bf r}_{n+3}+b_{n+3,n}{\bf r}_{n+4}\;,\label{basis-relation-with-rn-2}
\end{align}

Let us write $P_n(z)=z^n +f_nz^{n-1}+\ldots$, for $n=0,1,\ldots$, with $f_n=-\sum_{j=0} ^{n-1}B_j$.  Then using \eqref{Dx-xnSx-xn-quadratic}, we identify the coefficients of the terms in $z^{n+1}$ and in $z^n$ in each member of \eqref{equation-exple-deg-is-two} to obtain
\begin{align*}
a_n=na\;,~b_n=bn +anB_n -\frac{2}{3}an\beta (2n^2+6n+1)+a\sum_{j=0} ^{n-1}B_j\;.
\end{align*}
Without loss of generality, we set $a_{-1}:=-a$. Also taking $n=1$ in \eqref{equation-exple-deg-is-two}, we obtain
$$c_1=c-a(B_0-\beta)(B_1-\beta) +C_1 -\frac{\mathfrak{c}_5^2}{4}+(B_0-\beta)\big(b-a(6\beta-B_0-B_1)  \big)\;. $$
Then defining
\begin{align*}
A:=-\frac{a}{2}\Big((B_1-B_0)^2 -8\beta (B_0+B_1-2\beta )-2C_2\Big)-(a+1)\big(C_1 -\frac{\mathfrak{c}_5 ^2}{4} \big)\;,
\end{align*}
we combine the obtained system by taking $n=0,1,2$ in \eqref{basis-relation-an-with-rn-1} and $n=0$ in \eqref{basis-relation-with-rn-2}, using what is preceding to obtain
\begin{align}\label{basis-final}
{\bf S}_x {\bf a}_0 ^{[1]}={\bf a}_0 +\frac{1}{2}(B_1-B_0){\bf a}_1 +\frac{A}{3a}{\bf a}_2\;,
\end{align}
subject to conditions \eqref{condition-case-deg-is-two-1-quadratic}--\eqref{condition-case-deg-is-two-2-quadratic}.
Now apply ${\bf D}_x$ to \eqref{basis-final} using \eqref{DxnSx-u}, \eqref{basis-Dx-derivatives} and \eqref{def-fD_x-u} to write
$$
{\bf D}_x \left[{\bf a}_0 +\frac{1}{2}\Big((B_1-B_0)+2\texttt{U}_1\Big){\bf a}_1 +\frac{A}{3a}{\bf a}_2  \right]= - {\bf S}_x {\bf a}_1\;.
$$

So using \eqref{expression-an} and \eqref{TTRR_coefficients} we obtain
$${\bf D}_x(\phi {\bf u})={\bf S}_x(\psi {\bf u})\;,$$
where $\phi$ and $\psi$ are given in \eqref{expression-phi-psi-general-case-quadratic}. This is equivalent to the following equation
$$\phi(z) \mathrm{D}_x ^2 P_n(z) + \psi(z) \mathrm{S}_x\mathrm{D}_xP_n(z) =\lambda_nP_n(z)\;,~~\quad n=1,2,\ldots\;,$$
with $\lambda_n=n(1+(n-1)\mathfrak{a})\neq 0$, by applying \cite[Theorem 5: (a)$\Longleftrightarrow$ (c) ]{FK-NM2011}. Thus the desired result follows by Theorem \ref{KJ_to_use_2018}.
\end{proof}

\section{A special case}\label{Special_case}
Here we state a finer result for the special case where $a=0$ in \eqref{equation-exple-deg-is-two}. For this purpose the following result is appropriate.

\begin{theorem}\label{main-Thm1}\cite{KCDMJP2021-a, KDP2021}
Consider the lattice $z=x(s)=4\beta s^2+\mathfrak{c}_5s$. Let $(P_n)_{n\geq 0}$ be a monic OPS with respect to ${\bf u} \in \mathcal{P}^*$. 
Suppose that ${\bf u}$ satisfies $\mathbf{D}_x(\phi{\bf u})=\mathbf{S}_x(\psi{\bf u})$, where $\phi(z)=az^2+bz+c$ and $\psi(z)=dz+e$, with $d\neq0$.
Then $(P_n)_{n\geq 0}$ satisfies \eqref{TTRR_relation} with
\begin{align}\label{Bn-Cn-quadratic}
B_n = \frac{ne_{n-1}}{d_{2n-2}} -\frac{(n+1)e_n}{d_{2n}} -2\beta n(n-1),\;  
C_{n+1} =-\frac{(n+1)d_{n-1}}{d_{2n-1}d_{2n+1}}\phi ^{[n]}\left(-\beta n^2 -\frac{e_n}{d_{2n}}  \right),
\end{align} 
where $d_n =an+d$, $e_n=bn+e+2\beta dn^2$, and 
$$\phi ^{[n]}(z)=az^2 +(b+6\beta nd_n)z+ \phi(\beta n^2)+2\beta n\psi(\beta n^2)+\frac{n}{4}\mathfrak{c}_5 ^2d_n\;.
$$ 

\end{theorem}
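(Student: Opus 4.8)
The plan is to show that the hypothesis $\mathbf D_x(\phi{\bf u})=\mathbf S_x(\psi{\bf u})$ forces $(P_n)_{n\geq 0}$ to satisfy a second-order difference equation of the form $\phi\,\mathrm D_x^2 P_n+\psi\,\mathrm S_x\mathrm D_x P_n=\lambda_n P_n$ with explicit $\lambda_n$, and then to extract the three-term recurrence coefficients by comparing leading behaviour and by pairing the equation against ${\bf u}$, $z{\bf u}$, etc. First I would translate the functional identity into the polynomial (``distributional'') equation: testing $\mathbf D_x(\phi{\bf u})=\mathbf S_x(\psi{\bf u})$ against $P_n$ and using the defining adjoint relations for $\mathbf D_x,\mathbf S_x$ together with \eqref{def-Dx-fg}, \eqref{def-Sx-fg} gives, after the standard manipulation (this is exactly the mechanism used in the proof of Theorem \ref{propo-sol-quadratic} via \cite[Theorem 5]{FK-NM2011}), that $(P_n)_{n\geq0}$ is the polynomial eigenfunction sequence of $\mathcal L:=\phi\,\mathrm D_x^2+\psi\,\mathrm S_x\mathrm D_x$. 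Using the quadratic-lattice expansions \eqref{Dx-xnSx-xn-quadratic}, $\mathrm D_x z^n=nz^{n-1}+v_nz^{n-2}+\cdots$ and $\mathrm S_x z^n=z^n+\widehat v_nz^{n-1}+\cdots$ with $v_n=\beta n(n-1)(2n-1)/3$, $\widehat v_n=\beta n(2n-1)$, one computes $\mathrm S_x\mathrm D_x z^n=nz^{n-1}+(v_n+n\widehat v_{n-1})z^{n-2}+\cdots$, hence the $z^n$-coefficient of $\mathcal L z^n$ is $an(n-1)+dn=d_{2n-2}\cdot\tfrac{?}{}$; matching this with $\lambda_n$ and being careful with the indexing gives the eigenvalue, and in particular $\lambda_n\neq\lambda_m$ for $n\neq m$ because $d\neq0$, which guarantees the eigenfunctions are unique up to scaling and that $\mathcal L$ genuinely lowers nothing.

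Next I would set up the bookkeeping for $B_n$ and $C_{n+1}$. Write $P_n(z)=z^n+f_nz^{n-1}+g_nz^{n-2}+\cdots$ with $f_n=-\sum_{j=0}^{n-1}B_j$ (from \eqref{TTRR_relation}) and $g_n$ the next symmetric-function combination of the $B_j$ and $C_j$. Plugging this into $\mathcal L P_n=\lambda_n P_n$ and equating the coefficients of $z^{n-1}$ and $z^{n-2}$ yields two scalar relations: the $z^{n-1}$-relation expresses $(\lambda_n-\lambda_{n-1})f_n$ in terms of the subleading coefficients $b,e,v_n,\widehat v_n$, and since $\lambda_n-\lambda_{n-1}=d_{2n-2}$ (up to the exact normalization forced by the eigenvalue computation) one solves for $f_n-f_{n-1}=-B_{n-1}$, i.e. for $B_{n-1}$, landing on the stated formula $B_n=\frac{ne_{n-1}}{d_{2n-2}}-\frac{(n+1)e_n}{d_{2n}}-2\beta n(n-1)$ after re-indexing; here $e_n=bn+e+2\beta dn^2$ is precisely the combination that appears because of the $\widehat v$-terms. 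The $z^{n-2}$-relation then involves $g_n$ and $C_n$; combined with the $z^{n-1}$ data it produces a first-order recursion for $\prod C_j$, which one solves. A cleaner route to $C_{n+1}$ is to introduce the derived operator/polynomial $\phi^{[n]}$: from \eqref{def-fDxg} or directly from how $\mathcal L$ acts on $\mathrm D_x^nP_{2n}$-type objects, $\phi^{[n]}(z)=az^2+(b+6\beta n d_n)z+\phi(\beta n^2)+2\beta n\psi(\beta n^2)+\tfrac n4\mathfrak c_5^2 d_n$ is the ``$n$-th companion'' of $\phi$ adapted to the lattice, and then $C_{n+1}=-\frac{(n+1)d_{n-1}}{d_{2n-1}d_{2n+1}}\phi^{[n]}\bigl(-\beta n^2-\frac{e_n}{d_{2n}}\bigr)$ follows from evaluating that companion at the node $z=-\beta n^2-e_n/d_{2n}$, which is exactly the argument making $B_n$'s last two terms collapse.

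The main obstacle I anticipate is the second step — getting the $z^{n-2}$ coefficient right and packaging it as the stated closed form for $C_{n+1}$. The quadratic-lattice correction terms $v_n,\widehat v_n$ are cubic/quadratic in $n$, so the coefficient identifications are genuinely messy, and one has to recognize the particular groupings $d_n=an+d$, $e_n=bn+e+2\beta dn^2$, and the shifted node $-\beta n^2-e_n/d_{2n}$ as the ``natural'' variables; without that insight the algebra does not visibly close. I would handle this by first doing the classical ($\beta=0$) case to fix the structure of the recursion and identify $d_n,e_n$, then reinstating the $\beta$-terms and checking that $\phi^{[n]}$ is the correct $\beta$-deformation (one can verify $\phi^{[0]}=\phi$, $\psi^{[0]}=\psi$, and that $\mathrm D_x(\phi^{[n]}(\cdot){\bf u}^{[n]})=\mathrm S_x(\psi^{[n]}(\cdot){\bf u}^{[n]})$ for the derived functional ${\bf u}^{[n]}$ associated to $P_n^{[n]}$, i.e. the whole family is ``closed under $\mathrm D_x$'', which is the conceptual reason the formulas take this form). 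Finally, since this statement is quoted verbatim from \cite{KCDMJP2021-a, KDP2021}, it suffices to point to that derivation; the role here is purely to have \eqref{Bn-Cn-quadratic} available for the special case $a=0$ where $d_n=d$ is constant and the formulas simplify.
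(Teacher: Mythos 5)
The paper offers no proof of this statement: Theorem \ref{main-Thm1} is imported verbatim from the cited works \cite{KCDMJP2021-a, KDP2021} (the citations are part of the theorem header), so there is no in-paper argument to compare yours against. Your closing observation --- that it suffices to point to those references --- is therefore exactly what the paper does, and on that level your submission is adequate.

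As a reconstruction of how the formulas are actually derived, your outline is essentially sound: the functional equation $\mathbf{D}_x(\phi{\bf u})=\mathbf{S}_x(\psi{\bf u})$ is indeed equivalent (via \cite[Theorem 5]{FK-NM2011}, which this paper invokes for the same purpose in the proof of Theorem \ref{propo-sol-quadratic}) to the eigenvalue equation $\phi\,\mathrm D_x^2P_n+\psi\,\mathrm S_x\mathrm D_xP_n=\lambda_nP_n$ with $\lambda_n=n\,d_{n-1}$, and your identity $\lambda_n-\lambda_{n-1}=d_{2n-2}$ checks out, as does the resulting $B_0=-e/d$ consistent with \eqref{Bn-Cn-quadratic}. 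However, if this were meant to stand as a proof rather than a citation, two pieces are genuinely missing. First, the $C_{n+1}$ formula is only gestured at: you never actually carry out the $z^{n-2}$ comparison, and the claim that $C_{n+1}$ ``follows from evaluating the companion polynomial $\phi^{[n]}$ at the node $-\beta n^2-e_n/d_{2n}$'' is asserted, not established; the real mechanism in the cited works is that the derived functionals ${\bf u}^{[n]}$ satisfy $\mathbf{D}_x(\phi^{[n]}{\bf u}^{[n]})=\mathbf{S}_x(\psi^{[n]}{\bf u}^{[n]})$ with explicitly computed $\phi^{[n]},\psi^{[n]}$, and $C_{n+1}$ is read off from the $n$-th derived family at level zero --- proving that closure-under-$\mathrm D_x$ statement is the nontrivial content, and you flag it only as a consistency check to ``verify''. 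Second, your eigenvalue normalization is left unresolved (the fragment $d_{2n-2}\cdot\tfrac{?}{}$), though the subsequent use of $\lambda_n-\lambda_{n-1}=d_{2n-2}$ happens to be correct. Since the role of the theorem here is purely to feed \eqref{Bn-Cn-quadratic} into Corollary \ref{main-thm-quadratic-2}, deferring to \cite{KCDMJP2021-a, KDP2021} is the right call, but the sketch as written would not compile into a self-contained proof.
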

Recall that the continuous monic dual Hahn polynomial $(H_n(\cdot;a,b,c))_{n\geq 0}$ satisfies \eqref{TTRR_relation} (see \cite[p.197, (9.3.5)]{KLS2010}) with
\begin{align*}
&B_n=(n+a+b)(n+a+c)+n(n+b+c-1)-a^2\;,  \\
&C_{n+1}=(n+1)(n+a+b)(n+a+c)(n+b+c)\;,  \quad\quad n=0,1,\ldots\;,
\end{align*}
with the restrictions $-a-b,-a-c,-b-c \notin \mathbb{N}_0$.

We are now in the position to state our result.
\begin{corollary} \label{main-thm-quadratic-2}
Consider the lattice  $x(s)=4\beta s^2 +\mathfrak{c}_5 s $ with $(\beta,\mathfrak{c}_5)\neq (0,0)$.
Let $(P_n)_{n\geq 0}$ be a monic OPS with respect to ${\bf u}\in \mathcal{P}^*$ satisfying
\begin{align}
(z+c)\mathrm{D}_xP_{n}(z)= b_n\mathrm{S}_xP_n(z)+c_n\mathrm{S}_x P_{n-1}(z)\;,\quad n=0,1,\ldots\;,\label{equation-case-deg-is-one-quadratic}
\end{align}
with $c_n\neq 0$ for $n=1,2,\ldots$, where the constant $c$ is chosen such that 
\begin{align}
2\left(C_2+ (B_0-B_1)c  \right)=(B_1-5\beta)^2-(B_0-5\beta)^2\;.\label{condition-case-deg-is-one-quadratic}
\end{align}
 Then up to an affine transformation of the variable, $P_n$ is one of the following specific case of the continuous dual Hahn polynomial $(H_n(\cdot;a,b,c))_{n\geq 0}$:
\begin{align}\label{solu-case-deg-one-quadratic}
P_n(z)=(-4\beta)^nH_n\left(-\frac{1}{4\beta}\big( z +\frac{\mathfrak{c}_5 ^2}{16\beta}\big);a,b,\frac{1}{2}  \right)\;,
\end{align}
or
\begin{small}
\begin{align}\label{solu-case-deg-one-quadratic-additional}
P_n(z)=(-4\beta)^nH_n\left(-\frac{1}{4\beta}\big( z+2\beta +\frac{\mathfrak{c}_5 ^2}{16\beta}\big);d-\frac{1}{2},e-\frac{1}{2},\frac{1}{2}  \right),
\end{align}
\end{small}
 for each $n=0,1,\ldots$, where $d$ and $e$ are complex numbers such  $-d\notin \mathbb{N}_0$ and 
$$
e:=-1+\frac{1}{1+d}\left(1-\frac{\mathfrak{c}_5 ^2 }{64\beta ^2}   \right).
$$
\end{corollary}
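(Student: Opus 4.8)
The plan is to show that the hypotheses of Corollary~\ref{main-thm-quadratic-2} place us in the situation of Theorem~\ref{propo-sol-quadratic} with $a=0$, so that the functional ${\bf u}$ turns out to be semiclassical of a very restricted type, and then to pin down the parameters explicitly using Theorem~\ref{main-Thm1} and the known recurrence coefficients of the continuous dual Hahn polynomials.

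First I would run the argument of the proof of Theorem~\ref{propo-sol-quadratic} with $\pi_2(z)=z+c$ (i.e.\ $a=0$, $b=1$). Using \eqref{def-fDxg} together with $\mathrm{D}_x(z+c)=1$ and $\mathrm{S}_x(z+c)=z+c+ \beta$ (a special case of the formulas in that proof), equation \eqref{equation-case-deg-is-one-quadratic} and the TTRR \eqref{TTRR_relation} turn into a structure relation of the form $\sum a_{n,j}\mathrm{S}_xP_j=\sum b_{n,j}P_j^{[1]}$ with shorter support than in the degree-two case (the $P_{n+1}^{[1]}$, $P_n^{[1]}$, $P_{n-1}^{[1]}$, $P_{n-2}^{[1]}$ terms only, because no $z^2$ appears). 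Passing to the dual bases $({\bf a}_n)$, $({\bf a}_n^{[1]})$, $({\bf r}_n)$ exactly as in that proof gives relations analogous to \eqref{basis-relation-an-with-rn-1}--\eqref{basis-relation-with-rn-2}; eliminating the ${\bf r}_n$'s and using condition \eqref{condition-case-deg-is-one-quadratic} (which is the $a=0$ specialization of the compatibility condition that made \eqref{basis-final} possible) yields a relation ${\bf S}_x{\bf a}_0^{[1]}=\mu_0{\bf a}_0+\mu_1{\bf a}_1$ with $\mu_1\neq0$. Applying ${\bf D}_x$ via \eqref{DxnSx-u}, \eqref{basis-Dx-derivatives} and \eqref{def-fD_x-u}, and rewriting with \eqref{expression-an} and \eqref{TTRR_coefficients}, produces ${\bf D}_x(\phi{\bf u})={\bf S}_x(\psi{\bf u})$ with $\deg\phi\le 1$ and $\deg\psi\le 1$; since the right-hand side must have the correct degree for regularity (cf.\ the use of \cite[Theorem 5]{FK-NM2011}), one gets $\phi(z)=z+c$ up to normalization and $\psi(z)=dz+e$ with $d\neq0$. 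In fact $d$ and $e$ are determined by $\mu_0$, $\mu_1$, hence by $B_0$, $B_1$, $C_1$, $C_2$ through \eqref{TTRR_coefficients}.

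Next, since ${\bf u}$ satisfies $\mathbf{D}_x(\phi{\bf u})=\mathbf{S}_x(\psi{\bf u})$ with $\phi$ linear and $\psi$ linear, Theorem~\ref{main-Thm1} applies and gives the recurrence coefficients $B_n$, $C_{n+1}$ of $(P_n)_{n\geq0}$ explicitly in terms of $d_n=d$ (note $a=0$ so $d_n\equiv d$), $e_n=n+e+2\beta d n^2$, and $\phi^{[n]}(z)=z+c+6\beta n d\cdot(\text{coeff})+\cdots$. I would then compare these with the recurrence coefficients of $(-4\beta)^nH_n\big(-\tfrac1{4\beta}(z+\lambda);a,b,c\big)$ for a suitable shift $\lambda$ and a suitable affine change of variable: the map $z\mapsto -\tfrac1{4\beta}(z+\tfrac{\mathfrak{c}_5^2}{16\beta})$ is exactly the one that sends the quadratic lattice $x(s)=4\beta s^2+\mathfrak{c}_5 s$ to the standard lattice $s^2$ (completing the square), which is why the third parameter comes out as $1/2$. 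Matching the $\mathfrak{c}_5^2$-term forces the normalization; there are then two branches according to whether $d=-1$ is excluded or not, giving the two families \eqref{solu-case-deg-one-quadratic} and \eqref{solu-case-deg-one-quadratic-additional}, the second one corresponding to the degenerate alignment where an extra shift by $2\beta$ is needed and the parameters become $d-\tfrac12$, $e-\tfrac12$ with $e$ forced by the displayed relation.

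The main obstacle, I expect, is not the conceptual structure --- which is a faithful $a=0$ echo of Theorem~\ref{propo-sol-quadratic} --- but the bookkeeping needed to (i) check that condition \eqref{condition-case-deg-is-one-quadratic} is precisely what is required for the ${\bf r}_n$'s to be eliminable (the analogue of verifying \eqref{basis-final}), and (ii) carry out the parameter identification with the continuous dual Hahn table, including tracking the shift $\lambda$ and recognizing the two solution branches. In particular one must verify that the regularity hypothesis $c_n\neq 0$ guarantees $d\neq 0$ and $-a-b,-a-c,-b-c\notin\mathbb{N}_0$ after the identification, so that the output is genuinely an OPS; and one must confirm that no other specialization of the continuous dual Hahn or Wilson family with $a=0$ in $\phi$ can occur, which follows because $\deg\phi=1$ rules out the Wilson case and the genuinely two-parameter dual Hahn case, leaving exactly the one-parameter reductions recorded in \eqref{solu-case-deg-one-quadratic}--\eqref{solu-case-deg-one-quadratic-additional}.
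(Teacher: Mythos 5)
Your first step matches the paper: both you and the authors reduce to the Pearson equation ${\bf D}_x(\phi{\bf u})={\bf S}_x(\psi{\bf u})$ by invoking Theorem \ref{propo-sol-quadratic} with $a=0$, $b=1$, and then feed $\phi,\psi$ into Theorem \ref{main-Thm1} to obtain closed forms for $B_n$ and $C_{n+1}$. But there is a genuine gap in the second half. A Pearson equation with $\deg\phi\le 1$ and $\deg\psi=1$ carries three essential free parameters (here $B_0$, $B_1$, $C_1$, as one sees from \eqref{bn-case-degree-is-1-quadratic}--\eqref{cn-case-degree-is-1-quadratic}), which is exactly the number of parameters of the full continuous dual Hahn family on a fixed quadratic lattice. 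So coefficient matching at this stage can only show that $P_n$ is \emph{some} continuous dual Hahn polynomial; it cannot produce the claimed restriction that the third parameter equals $1/2$ (a codimension-one condition), let alone the further collapse to a one-parameter family in \eqref{solu-case-deg-one-quadratic-additional}. The paper obtains this restriction from an additional constraint, $2\beta C_1=(B_1-B_0+8\beta)\bigl((B_0-\beta)\beta+\mathfrak{c}_5^2/16\bigr)$, which does \emph{not} follow from the Pearson equation: it is extracted by computing $\mathrm{D}_xP_3$ and $\mathrm{S}_xP_3$ explicitly and imposing the $n=3$ instance of \eqref{equation-case-deg-is-one-quadratic}, combined with the expressions for $b_n$, $c_n$ obtained from leading-coefficient comparison. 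Your proposal never returns to the structure relation after deriving the Pearson equation, so this constraint --- and with it the actual conclusion of the corollary --- is out of reach.

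A secondary issue: your case split ``according to whether $d=-1$ is excluded or not'' does not correspond to the paper's dichotomy. In the paper $\psi(z)=z-B_0$, so the coefficient you call $d$ equals $1$ throughout; the two families \eqref{solu-case-deg-one-quadratic} and \eqref{solu-case-deg-one-quadratic-additional} arise from the cases $B_0+B_1\neq2\beta$ and $B_0+B_1=2\beta$ under the constraint above, each resolved by introducing the dual Hahn parameters as roots of an explicit quadratic built from $B_0$ and $B_1$. Also, $\phi$ is not ``$z+c$ up to normalization'': it comes out as $-\tfrac12(B_1-B_0+4\beta)(z-B_0)-C_1$, and its degree can even drop when $B_1-B_0+4\beta=0$, a case your argument would need to handle separately.
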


\begin{proof}
Let ${\bf u}\in \mathcal{P}^*$ be the regular functional with respect to which $(P_n)_{n\geq 0}$ is a monic OPS. Suppose that $(P_n)_{n\geq 0}$ satisfies \eqref{equation-case-deg-is-one-quadratic} and subject to the restriction \eqref{condition-case-deg-is-one-quadratic}. Then from Theorem \ref{propo-sol-quadratic} we deduce (by taking $a=0$, $b=1$ and taking also $n=1,2$ in \eqref{equation-case-deg-is-one-quadratic}) that ${\bf D}_x(\phi {\bf u})={\bf S}_x(\psi {\bf u})$ holds where
\begin{align*}
&\phi(z)= -\frac{1}{2}\left(B_1-B_0+4\beta \right)(z-B_0)-C_1,~~\psi(z)=z-B_0\;.
\end{align*}
We apply \eqref{Bn-Cn-quadratic} to obtain
\begin{align}
B_n&=-8\beta n^2+(B_1-B_0+8\beta)n+B_0\;,\label{bn-case-degree-is-1-quadratic}     \\
C_{n+1}&=\frac{1}{4}(n+1)\Big[64\beta ^2 n^3 -16\beta (B_1-B_0+4\beta)n^2 +4C_1 \label{cn-case-degree-is-1-quadratic}\\
&\quad\quad \quad\quad\quad\quad~~~~~~~~~ +\Big((B_1-B_0)^2-\mathfrak{c}_5 ^2 +8\beta(B_1-3B_0+2\beta )  \Big)n  \Big]  \nonumber \;,
\end{align}
for each $n=0,1,\ldots$. In addition, we claim that the parameters $B_0$, $B_1$ and $C_1$ are related by the following equation
\begin{align}
2\beta C_1=(B_1-	B_0+8\beta)\big((B_0-\beta)\beta +\frac{\mathfrak{c}_5 ^2}{16}\big)\;,\label{C1-in-term-of-B0-B1}
\end{align}
with $B_0+B_1=2\beta$ or $B_0+B_1\neq 2\beta$.

Indeed writing $P_n(z)=z^n +f_nz^{n-1}+\cdots$, where $f_0:=0$ and $f_n=-\sum_{j=0} ^{n-1}B_j$ for $n=1,2,\ldots$, we identify the coefficients of the two firsts therms with higher degree in \eqref{equation-case-deg-is-one-quadratic} using \eqref{Dx-xnSx-xn-quadratic} to obtain 
\begin{align}
b_n=n,\quad c_n=\sum_{j=0} ^{n-1}B_j -\frac{1}{3}(4n^2-1)n\beta +nc\;, \quad n=0,1,\ldots \;.\label{second-hand-datta-case-deg-1-quadratic}
\end{align}

Also by direct computations we obtain
\begin{align*}
&D_xP_2(z)=2z+2\beta -B_0-B_1\;,\\
&S_xP_2(z)=z^2+(6\beta-B_0-B_1)z+(\beta-B_0)(\beta-B_1) +\frac{\mathfrak{c}_5 ^2}{4}-C_1\;.
\end{align*}
Similarly we obtain $D_xP_3$ and $S_xP_3$ by taking $n=3$ in \eqref{TTRR_relation} using \eqref{def-Dx-fg}--\eqref{def-Sx-fg}:
\begin{align*}
D_xP_3(z)&=3z^2+2(5\beta-B_0-B_1-B_2)z+(\beta-B_2)(2\beta-B_0-B_1)\\
&+(\beta-B_0)(\beta-B_1)-C_1-C_2  +\frac{\mathfrak{c}_5 ^2}{4} \;,
\end{align*}
and
\begin{small}
\begin{align*}
S_xP_3(z)&=z^3+(15\beta-B_0-B_1-B_2)z^2+\Big(4\beta(2\beta-B_0-B_1)+3\frac{\mathfrak{c}_5 ^2}{4}-C_1-C_2\\
&+(\beta-B_0)(\beta-B_1)+(\beta-B_2)(6\beta-B_0-B_1) \Big)z+(B_0-\beta)C_2\\
&-(B_0+B_1+B_2-3\beta)\frac{\mathfrak{c}_5 ^2}{4}+(\beta-B_2)\Big((\beta-B_0)(\beta-B_1)-C_1 \Big)   \;.
\end{align*}
\end{small}
Taking $n=3$ in \eqref{equation-case-deg-is-one-quadratic}, using what is preceding we obtain the following equations:
\begin{small}
\begin{align*}
C_2+C_1-2\beta(c_3-c)+(B_0+B_1-2\beta)(\frac{1}{2}c_3+7\beta-B_2-c)\\
+(B_2-\beta)(6\beta-c)+(B_0-\beta)(\beta-B_1)-\mathfrak{c}_5 ^2=0\;, \\
(3B_0-3\beta+c)C_2+(3B_2-3\beta-c_3+c)C_1+(\beta-B_0)(\beta-B_1)(c_3+3\beta-3B_2-c)\\
-\frac{\mathfrak{c}_5 ^2}{4}(3B_0+3B_1+3B_2-9\beta-c_3+c)+c(B_2-\beta)(2\beta-B_0-B_1)=0\;.
\end{align*}
\end{small}
Hence \eqref{C1-in-term-of-B0-B1} is obtained from the previous equations by using the expressions of $c_3$, $B_2$, $C_2$ and $c$ giving by \eqref{second-hand-datta-case-deg-1-quadratic}, \eqref{bn-case-degree-is-1-quadratic}, \eqref{cn-case-degree-is-1-quadratic} and \eqref{condition-case-deg-is-one-quadratic}, respectively.   
\\
Recall that from \eqref{condition-case-deg-is-one-quadratic}, $B_1\neq B_0$ and consequently from \eqref{C1-in-term-of-B0-B1}, we obtain $\beta \neq 0$. Further, according to \eqref{C1-in-term-of-B0-B1}, only $B_0$ and $B_1$ may be consider as free parameters. Let then $a$ and $b$ be two complex numbers solutions of the following quadratic equation:

$$8\beta Z^2 +(B_1-B_0+8\beta)Z +\frac{1}{2} (B_1-5B_0)+4\beta -\frac{\mathfrak{c}_5 ^2}{8\beta}  =0\;.$$
That is 
$$
(a,b) ~\textit{or}~(b,a)\in \left\lbrace \Big(\frac{1}{16\beta}(B_0-B_1-8\beta)-\sqrt{\Delta} ,\frac{1}{16\beta}(B_0-B_1-8\beta)+\sqrt{\Delta} \Big)   \right\rbrace
\;,$$
where $\Delta:=\frac{1}{256\beta^2}(B_1-B_0+8\beta)^2 +\frac{1}{8\beta}\Big(\frac{1}{2}(5B_0-B_1)-4\beta +\frac{\mathfrak{c}_5 ^2}{8\beta}  \Big)$.
Then we may express $B_0$ and $B_1$ in term of $a$ and $b$ as follows
\begin{align*}
&B_0=-2\beta (a+b+2ab) -\frac{\mathfrak{c}_5 ^2}{16\beta}  \;,\\
&B_1=-2\beta (5a+5b+4+2ab) -\frac{\mathfrak{c}_5 ^2}{16\beta}  \;.
\end{align*}
So \eqref{C1-in-term-of-B0-B1} becomes $C_1=4\beta^2(a+b)(2a+1)(2b+1)$.
Then from \eqref{bn-case-degree-is-1-quadratic}--\eqref{cn-case-degree-is-1-quadratic} we obtain
\begin{align*}
&\small{B_n=-2\beta \Big(4n(n+a+b)+2ab+a+b  \Big) -\frac{\mathfrak{c}_5 ^2}{16\beta}}\;,\\
&C_{n+1}=4\beta^2(n+a+b)(2n+2a+1)(2n+2b+1)\;,\quad \quad n=0,1,\ldots\;,
\end{align*}
with the condition $-a-b,-(2a+1)/2,-(2b+1)/2 \notin \mathbb{N}_0$, obtained from the regularity conditions. In addition, using \eqref{condition-case-deg-is-one-quadratic} and \eqref{second-hand-datta-case-deg-1-quadratic}, we obtain
\begin{align*}
c=\frac{\mathfrak{c}_5^2}{16\beta}\;,\quad c_n=&-\beta n(2n+2a-1)(2n+2b-1)\;.
\end{align*}
Hence \eqref{solu-case-deg-one-quadratic} holds.

We remark that if in addition to \eqref{C1-in-term-of-B0-B1}, we have $B_1=-B_0+2\beta$, then $B_0$ will be the only free parameter. For this case let $d$ and $e$ be two complex numbers solutions of the following quadratic equation
$$
4\beta Z^2+(\beta-B_0)(Z+1)-\frac{\mathfrak{c}_5 ^2}{16\beta}=0\;.
$$
Then we obtain 
$$
(d,e)~\textit{or}~(e,d)\in \left\lbrace\left(\frac{1}{8\beta}(B_0-\beta)-\sqrt{\Delta},\frac{1}{8\beta}(B_0-\beta)+\sqrt{\Delta}   \right)    \right\rbrace\;,
$$
where $\Delta:=\frac{1}{4\beta}\left[(B_0-\beta)\left(1+\frac{1}{16\beta}(B_0-\beta)\right)+\frac{\mathfrak{c}_5 ^2}{16\beta} \right]$.

Then we have $\beta (1+4d+4e)=B_0=\beta -4\beta de-\frac{\mathfrak{c}_5 ^2}{16\beta}$. So $d$ and $e$ are related by the following relation
$$
(d+1)(e+1)=1-\frac{\mathfrak{c}_5 ^2}{64\beta ^2}  \;.
$$
Therefore \eqref{bn-case-degree-is-1-quadratic}--\eqref{cn-case-degree-is-1-quadratic} become
\begin{align*}
B_n&=-8\beta n^2 -8\beta(d+e-1)n +\beta(1-4de)-\frac{\mathfrak{c}_5 ^2}{16\beta} \;, \\
C_{n+1}&=16\beta^2(n+1)(n+d+e-1)(n+d)(n+e)\;,\quad n=0,1,\ldots\;,
\end{align*}
where $-d,-e,-d-e\notin \mathbb{N}_0$ by regularity conditions. We also have 
$$
c=\frac{\mathfrak{c}_5 ^2}{16\beta},~c_n=-\frac{4}{3}n\beta\left(7n^2+6(d+e-1)n+3de-1  \right)
$$
Hence \eqref{solu-case-deg-one-quadratic-additional} holds.
\end{proof}

\section*{Acknowledgements }
The authors are partially supported by ERDF and Consejer\'ia de Econom\'ia, Conocimiento, Empresas y Universidad de la Junta de Andaluc\'ia (grant UAL18-FQM-B025-A) and by the Research Group FQM-0229 (belonging to Campus of International Excellence CEIMAR). The authors JFMM and JJMB are partially supported by the Ministry of Science, Innovation, and Universities of Spain and the European Regional Development Fund (ERDF) (Grant MTM2017-89941-P) and by the Research Centre CDTIME of Universidad de Almer\'ia. The author JJMB is also supported in part by Junta de Andaluc\'ia and ERDF, Ref. SOMM17/6105/UGR.

{

\end{document}